\newcommand{\decone}[1]{{\bf d}_{#1}\mathcal{A}}
\newcommand{\Arr}{\mathcal{A}}
 \newcommand{\Span}[1]{\mathrm{Span}\left\{#1\right\}}
\newcommand{\arr}{\mathcal{A}}
\newcommand{\carr}{\mathbf{c}\mathcal{A}}
\newcommand{\CC}{\mathbb{C}}
\newcommand{\CP}{\mathbb{CP}}
\newcommand{\ZZ}{\mathbb{Z}}
\newcommand{\Hom}{\text{Hom}}
\newcommand{\FF}{{\mathbb F}}
\theoremstyle{plain}
\newtheorem{theorem}{Theorem}[section]
\newtheorem{maintheorem}{Main Theorem}
\newtheorem{prop}{Proposition}[section]
\newtheorem{cor}{Corollary}[section]
\newtheorem{corollary}{Corollary}[section]
\newtheorem{lemma}{Lemma}[section]
\theoremstyle{definition}
\newtheorem{definition}{Definition}[section]
\newtheorem{defn}{Definition}[section]
\newtheorem{example}{Example}[section]
\newtheorem{notation}{Notation}[section]
\let\c@theorem=\c@thm
\let\c@lem=\c@thm
\let\c@lemma=\c@thm
\let\c@prop=\c@thm
\let\c@cor=\c@thm
\let\c@cong=\c@thm
\let\c@defn=\c@thm
\let\c@remark=\c@thm
\let\c@example=\c@thm
\let\c@note=\c@thm
\let\c@nte=\c@thm
\let\c@observe=\c@thm
\let\c@notation=\c@thm
 \tikzset{join/.code=\tikzset{after node path={%
 \ifx\tikzchainprevious\pgfutil@empty\else(\tikzchainprevious)%
 edge[every join]#1(\tikzchaincurrent)\fi}}}
 \tikzset{>=stealth',every on chain/.append style={join},
          every join/.style={->}}
\begin{document}

\title[Converse to Oka and Sakamoto]{A converse to a theorem of Oka and Sakamoto for complex line arrangements }
\author{Kristopher Williams}
\address{Department of Mathematics, Doane College, Crete, NE 68333, USA}
\email{kristopher.williams@doane.edu}

\subjclass[2010]{Primary 14F35; Secondary 57M05, 52C35, 32s22.}

\keywords{line arrangement, hyperplane arrangement, Oka and Sakamoto,
  direct product of groups, fundamental groups, algebraic curves}

\begin{abstract} Let  \( C_1 \) and  \( C_2 \) be algebraic plane curves in  \( \CC^2 \) such that the curves intersect in  \( d_1 \cdot d_2 \) points where  \( d_1,d_2\) are the degrees of the curves respectively. Oka and Sakamoto proved that  \( \pi_1(\mathbb{C}^2 \setminus C_1 \cup C_2)) \cong   \pi_1(\mathbb{C}^2 \setminus C_1 ) \times  \pi_1(\mathbb{C}^2 \setminus C_2) \) \cite{Oka-Sakamoto-ProductTheorem-MR513072}. In this paper we prove the converse of Oka and Sakamoto's result for line arrangements. Let  \( \mathcal{A}_1 \) and  \( \mathcal{A}_2 \) be non-empty arrangements of lines in  \(\mathbb{C}^2 \) such that \(
  \pi_1(M(\mathcal{A}_1 \cup \mathcal{A}_2)) \cong \pi_1(M(\mathcal{A}_1)) \times \pi_1(M(\mathcal{A}_2)).
  \)
  Then,  the intersection of \( \mathcal{A}_1 \) and  \( \mathcal{A}_2 \) consists of  \( |\mathcal{A}_1| \cdot |\mathcal{A}_2| \) points of multiplicity two.
 \end{abstract} 
\maketitle

\section{Introduction}\label{sec:intro}
Let  \( V \) be a hypersurface in  \( \CP^l \). By the hyperplane section theorem of Hamm and Le \cite{MR0401755}, the fundamental group  \( \pi_1(\CP^l \setminus V) \) is isomorphic to the fundamental group  \( \pi_1(\CP^2 \setminus C) \) where  \( \CP^2 \) is a generic 2-dimensional projective subspace in  \( \CP^l \) and  \( C= V \cap \CP^2 \). Zariski  began the first systematic study of the fundamental group of the complement of the curve  \( \pi_1(\CP^2 \setminus C) \)  in 1929 \cite{Zariski-existence}. Since then, many authors have studied the properties of the fundamental group of complements of hypersurfaces. 

The complements of hyperplane arrangements are one of several classes of hypersurfaces whose fundamental groups are a subject of active research. A hyperplane arrangement  \( \arr\) is a finite collection of codimension one affine subspaces in  \( \CC^l \) (see the work of Orlik and Terao \cite{OT-Arrs-MR1217488} as a general reference for arrangements). One of the objects associated to an arrangement is the complement of the arrangement  \( M(\arr) = \CC^l \setminus \cup_{H \in \arr} H \). Similarly, one may define a projective arrangement  \(  \arr^* \) as a finite collection of projective hyperplanes in projective space with complement  \( M(\arr^*) = \CP^l \setminus \cup_{H \in \arr} H  \). Let  \( H'  \) be any hyperplane in the projective arrangement  \( \arr^* \). If we consider  \( H' \) as the ``hyperplane at infinity,'' then we may identify  \( \CP^l \setminus H' \) with affine space  \( \CC^l \). Also, the set  \( \decone{H'}^*=\{  H \setminus H' | H \in \arr^* \} \) is identified with a set of hyperplanes in  \( \CC^l \). While the resulting arrangement \( \decone{H'}^* \) certainly depends on the choice of hyperplane  \( H' \), the complements of the arrangements  \( M(\decone{H'}^*) \) are diffeomorphic for all choices of  \( H' \in \arr^* \) (Proposition 5.1, \cite{OT-Arrs-MR1217488}).

Another interesting object associated to an arrangement  \( \arr \) is the intersection lattice  \( L(\arr) \), which is the set of non-empty intersections of hyperplanes in the arrangement. The intersection lattice is a partially ordered set ordered by reverse inclusion. Any property of the arrangement or its complement that may be determined from the intersection lattice is called combinatorial.

One of the major questions in the study of hyperplane arrangements is what extent the combinatorics of the arrangement determines the topology of the complement of the arrangement. A result of Orlik and Solomon \cite{Orlik-Solomon-MR558866} shows that the intersection lattice determines the cohomology algebra of the complement of an arrangement. However, Rybnikov \cite{Ryb98} gave an example of a pair of arrangements that had the same intersection lattice but whose complements have non-isomorphic fundamental groups. 

While the fundamental group is not a combinatorial invariant, it is interesting to ask what properties of the fundamental group are combinatorial and for what classes of combinatorics the fundamental group is determined. In 1996, Fan \cite{Fan-Directproducts-MR1460414} defined a combinatorially determined graph associated to any projective arrangement in  \( \CP^2\). Fan went on to show that if the graph is a forest of  trees then the fundamental group of the arrangement is isomorphic to a direct product of free groups. The converse to this theorem was later proven by Eliyahu, Liberman, Schaps and Teicher \cite{ELST-Char-DSFG-converse-AG}. In fact, if the fundamental group of the complement of an arrangement in  \( \CC^2 \) is isomorphic to direct product of free groups, then the fundamental group determines the homotopy type of the complement of the arrangement \cite{KW-DPFG}. 

The following is a combinatorial theorem proven by Oka and Sakamoto regarding the fundamental groups of complements of plane algebraic curves.

\begin{theorem}[\cite{Oka-Sakamoto-ProductTheorem-MR513072}]
\label{thm:okasakamoto}
Let $C_{1}$ and $C_{2}$ be plane algebraic curves in $\CC^{2}$. Assume that the
intersection $C_{1}\cap C_{2}$ consists of distinct $d_{1}\cdot d_{2}$ points where $d_{i}(i=1,2)$ are the respective degrees of $C_{1}$ and $C_{2}$. Then the fundamental group $\pi_{1}(\CC^{2}\setminus C_{1}\cup C_{2})$ is isomorphic to the product of $\pi_{1}(\CC^{2} \setminus C_{1})$ and $\pi_{1}(\CC^{2}\setminus C_{2})$ .
\end{theorem}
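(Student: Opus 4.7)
The plan is to apply the Zariski--van Kampen theorem to a generic linear pencil of affine lines, to observe that the resulting relations cleanly split into relations internal to $C_1$, relations internal to $C_2$, and cross commutators coming from the $d_1 d_2$ transverse intersections, and finally to use those commutators to exhibit the direct product structure.

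First I would choose a pencil $\{L_t\}_{t \in \CC}$ in sufficiently general position that a typical $L_t$ meets $C_1$ transversally in $d_1$ distinct points and $C_2$ transversally in $d_2$ further distinct points, with the only degenerations along the pencil occurring at a finite set $\{t_1, \ldots, t_s\}$ corresponding to tangencies with $C_1$, tangencies with $C_2$, singular points of either curve, or the $d_1 d_2$ points of $C_1 \cap C_2$. The generic fibre $L_t \setminus (L_t \cap (C_1 \cup C_2))$ is a punctured affine line with free fundamental group of rank $d_1 + d_2$, generated by meridians $a_1, \ldots, a_{d_1}$ of $C_1$ and $b_1, \ldots, b_{d_2}$ of $C_2$. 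By Zariski--van Kampen, $\pi_1(\CC^2 \setminus (C_1 \cup C_2))$ is this free group modulo the braid monodromy relations around each critical value $t_k$.

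Next I would sort the critical values into three types and analyse their braid monodromies. Types (i) and (ii) are the critical values associated with the geometry of $C_1$ and $C_2$ individually (tangencies and singular points of each curve); their braid monodromies act on only the $a$- or only the $b$-strands and yield precisely the Zariski--van Kampen relations for $\pi_1(\CC^2 \setminus C_1)$ and $\pi_1(\CC^2 \setminus C_2)$ respectively, using the same pencil restricted to each curve alone. Type (iii) consists of the $d_1 d_2$ transverse intersection points of $C_1$ with $C_2$: at each such $t_k$ the local braid monodromy is a full twist of the two adjacent strands, one belonging to $C_1$ and one to $C_2$, and contributes a single commutator relation $[a_i, b_j] = 1$ between the meridians around the two local branches.

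The main obstacle will be the final step: promoting the $d_1 d_2$ individual commutators into full cross-commutativity $[a_i, b_j] = 1$ for every pair $(i, j)$, which is what upgrades the presentation above into the standard presentation of the direct product $\pi_1(\CC^2 \setminus C_1) \times \pi_1(\CC^2 \setminus C_2)$. The key leverage is that the type (i) and (ii) relations conjugate meridians within each irreducible component of $C_i$ into a single conjugacy class, so each intersection point contributes a commutator between specific conjugacy classes of meridians; because the hypothesis forces the intersection count to be maximal (by B\'ezout), every pair of irreducible components of $C_1$ and $C_2$ meets transversally in at least one point, so every pair of conjugacy classes of meridians is forced to commute. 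A careful bookkeeping of how these class-level commutators combine with the intra-component conjugations then yields $[a_i, b_j] = 1$ for all $i, j$, at which point the resulting presentation is exactly that of the direct product, completing the proof.
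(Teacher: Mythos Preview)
This theorem is not proved in the paper at all: it is quoted verbatim from Oka and Sakamoto's 1978 paper and used only as background motivation for the paper's actual results (the converse for line arrangements). So there is no ``paper's own proof'' to compare your attempt against.

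That said, your Zariski--van Kampen outline is the right general framework, and the tripartite classification of critical values is correct in spirit. But the final step, which you flag as ``the main obstacle,'' is genuinely the entire content of the theorem, and your argument for it has a gap. Knowing that a meridian $a_i$ of $C_1$ commutes with a meridian $b_j$ of $C_2$, and that $a_i'$ is conjugate to $a_i$, does \emph{not} imply $[a_i', b_j] = 1$: if $a_i' = w a_i w^{-1}$, you need $[a_i, w^{-1} b_j w] = 1$, which requires control over how $w$ interacts with $b_j$. Your appeal to ``class-level commutators'' therefore does not close the argument without a further induction or a structural reason why the conjugating words can themselves be taken to commute with the other side. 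Relatedly, the assertion that type (i) braid monodromies ``act on only the $a$-strands'' is not automatic: even when the local degeneration involves only $C_1$, the global braid around that critical value can drag $b$-strands along, so the resulting relation in the free group need not be a word in the $a_i$ alone. Arranging a pencil and a system of paths for which this separation actually holds is exactly the delicate point, and it is what Oka and Sakamoto's original proof has to organize carefully. Your plan identifies the right pieces but does not yet supply the mechanism that makes the cross-commutators propagate.
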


The converse of Theorem \ref{thm:okasakamoto} is not true in general. Consider the curve  \( C_1 \) defined as the zero locus of  \( Q_1(x,y)=x \) and the curve \( C_2  \) defined as the zero locus of  \( Q_2(x,y)=y^2-x^2-x^3 \). Then  \( C_1 \) and  \( C_2 \) intersect in a single point at the origin in a point of multiplicity three. However, the fundamental group is a direct product of groups: \[ 
\pi_1(\CC^2 \setminus C_1 \cup C_2) \cong \pi_1(\CC^2 \setminus C_1) \times \pi_1(\CC^2 \setminus C_2) \cong \ZZ \times \ZZ. 
\] 

The natural question is for what families of curves does the converse of Theorem \ref{thm:okasakamoto} hold. In this paper, we prove the following theorem, which is similar to the converse of Theorem \ref{thm:okasakamoto} for hyperplane arrangements. This theorem yields combinatorial information about the arrangement from a non-combinatorial invariant.

\begin{maintheorem}
  Let  \( \arr^*\) be an arrangement of projective lines in  \( \CP^2 \) such that \( \pi_1(M(\arr^*)) \cong G_1 \times G_2 \), where both  \( G_1 \) and  \( G_2 \) are non-trivial groups. Then there exists a line  \( L \in \arr^* \) such that the decone \( \decone{L}^* \) in  \( \CC^2 \) has non-trivial subarrangements  \( \mathcal{C}_1, \mathcal{C}_2 \) such that  \( \decone{L}^* = \mathcal{C}_1 \cup \mathcal{C}_2 \) and the curves defined by  \( \mathcal{C}_1\) and \( \mathcal{C}_2 \) intersect transversely. 
\end{maintheorem}

Finally, we prove the converse of Oka and Sakamoto's theorem for line arrangements:

\begin{maintheorem}
 Let  \( \arr_1 \) and  \( \arr_2 \) be empty arrangements in  \( \CC^2 \) such that \[
  \pi_1(M(\arr_1 \cup \arr_2)) \cong \pi_1(M(\arr_1)) \times \pi_1(M(\arr_2)).
  \]
  Then,  the intersection of \( \arr_1 \) and  \( \arr_2 \) consists of  \( |\arr_1| \cdot |\arr_2| \) points of multiplicity two.
\end{maintheorem}

\section{Preliminaries}
In this section we recall some facts about hyperplane arrangements. We also prove new results for the class of arrangements with fundamental groups that are isomorphic to non-trivial direct products of groups. 
 
\subsection{Arrangement Properties}
Let  \( \arr = \{H_i\}_{i=1}^{n} \) be an affine arrangement of  \( n \) distinct hyperplanes in  \( \CC^l \). By choosing a system of system of coordinates  \( \CC[z_1,z_1,\cdots,z_{l}] \) we may choose a degree one polynomial  \( \alpha_i \) (defined up to multiplication by a non-zero constant  \( c \in \CC \)) for each hyperplane \( H_i \) such that  \( H_i \) is the zero locus of  \( \alpha_i. \)  Denote by  \( Q(\arr) = \prod\limits_{i=1}^{n} \alpha_i\) the product of the polynomials associated to each hyperplane. The polynomial  \( Q(\arr) \) is called a \textbf{defining polynomial for  \( \arr \)} and is unique up to non-zero scalar multiple.

Let  \( Q' \in \CC[z_0,z_1,\cdots,z_l] \) be the polynomial  \( Q(\arr) \) homogenized with respect to  \( z_0 \) and define a polynomial  \( Q(\carr) = z_0 Q' \). The arrangement defined by  \( Q(\carr ) \) is called the \textbf{cone} over  \( \arr \) and is denoted by  \( \carr \). Note that  \( \carr \) is an arrangement of hyperplanes in  \( \CC^{l+1} \). This procedure has an inverse operation referred to as \textbf{deconing}. Let   \( \mathcal{C} \) be a  central (the intersection of all of the hyperplanes in the arrangement is non-empty) arrangement. Choose a hyperplane  \( L \in \mathcal{C}  \) and choose coordinates  \( \CC[z_0,z_1,\cdots,z_l] \) so that  \( L  \) is the zero locus of the polynomial  \( \alpha_L = z_0 \). If  \( Q(\mathcal{C} ) \) is a defining polynomial for  \( \mathcal{C} \) then we may define a polynomial  \( Q(\mathbf{d}_{L}\mathcal{C}) \in \CC[z_1,\cdots,z_l] \) by letting  \( z_0 \) equal  \( 1 \) in \( Q(\mathcal{C} ) \). Geometrically, one may view the deconing of an arrangement as the projectivization of the arrangement followed by removing the image of the hyperplane  \( L \) (the ``hyperplane at infinity''), then identifying the complement with affine space. 

The following proposition relates an arrangement and the cone over that arrangement.

\begin{prop}[Proposition 5.1, \cite{OT-Arrs-MR1217488}]\label{prop:hopf}
  Let  \( \arr \) be an affine arrangement and let  \( \carr \) be the cone of the arrangement  \( \arr \). The Hopf bundle  \( p\colon \CC^{l+1} \setminus \{0\} \to \CP^{l} \) is the map with fiber  \( \CC^* \) which identifies  \( z \in \CC^{l+1}  \) with  \( \lambda z \in \CC^* \).  The restriction of the map  \( p: M(\carr) \to M(\arr) \) is a trivial bundle, so \[
  M(\carr) \cong M(\arr) \times \CC^*.
  \]
\end{prop}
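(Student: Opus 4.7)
The plan is to construct an explicit trivialization of the restricted Hopf bundle $p\colon M(\carr) \to M(\arr)$. Let $Q(\arr)(z_1,\ldots,z_l)$ be a defining polynomial of degree $n=|\arr|$, and let $Q'(z_0,z_1,\ldots,z_l)$ denote its homogenization with respect to $z_0$, so $Q(\carr) = z_0\, Q'$. I would first propose the candidate map
\[
\phi \colon M(\arr) \times \CC^* \longrightarrow M(\carr), \qquad \phi\bigl((z_1,\ldots,z_l),\, w\bigr) = (w,\, wz_1,\, \ldots,\, wz_l),
\]
along with its proposed inverse $\psi(z_0,z_1,\ldots,z_l) = \bigl((z_1/z_0,\ldots,z_l/z_0),\, z_0\bigr)$, which is well defined on $M(\carr)$ because the hyperplane $\{z_0 = 0\}$ is precisely the one adjoined by the coning operation.

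The heart of the verification is to show that $\phi$ lands in $M(\carr)$ and $\psi$ lands in $M(\arr)\times\CC^*$. Both reduce to the single homogeneity identity $Q'(w, wz_1,\ldots,wz_l) = w^n\, Q(\arr)(z_1,\ldots,z_l)$: nonvanishing of $z_0 Q'$ at $(w, wz_1,\ldots,wz_l)$ becomes equivalent to $w\neq 0$ together with $Q(\arr)(z_1,\ldots,z_l)\neq 0$, and the reverse direction is immediate from $Q(\arr)(z) = Q'(1,z)$. Since $\phi$ and $\psi$ are polynomial and rational respectively on their domains, they are mutually inverse biholomorphisms, giving the claimed product decomposition $M(\carr)\cong M(\arr)\times\CC^*$.

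To identify this with the restricted Hopf bundle, I would invoke the standard identification of the chart $\{[z_0:\cdots:z_l]\in\CP^l : z_0\neq 0\}$ with $\CC^l$ via $[z_0:\cdots:z_l]\mapsto(z_1/z_0,\ldots,z_l/z_0)$. Under this chart, the restriction of $p$ to $M(\carr)$ sends $(z_0,\ldots,z_l)\mapsto(z_1/z_0,\ldots,z_l/z_0)$, which is exactly the first component of $\psi$; the second component $w = z_0$ is then a global section-free $\CC^*$-coordinate along each fiber, exhibiting the bundle as trivial. There is no serious obstacle here; the only care needed is in keeping the identification of the affine $M(\arr)\subset\CC^l$ with its image under the chart $z_0\neq 0$ consistent, and the explicit formulas for $\phi$ and $\psi$ make this transparent.
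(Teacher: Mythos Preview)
Your argument is correct. The paper itself does not supply a proof of this proposition; it is quoted verbatim as Proposition~5.1 of Orlik--Terao \cite{OT-Arrs-MR1217488} and used as a black box. What you have written is precisely the standard trivialization found in that reference: the global nonvanishing section $z_0$ (available because $\{z_0=0\}$ is one of the hyperplanes of $\carr$) splits the restricted Hopf bundle, and the explicit maps $\phi$ and $\psi$ you give, together with the homogeneity identity $Q'(w,wz_1,\ldots,wz_l)=w^{n}Q(\arr)(z_1,\ldots,z_l)$, make the diffeomorphism $M(\carr)\cong M(\arr)\times\CC^*$ transparent. There is nothing to correct.
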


We introduce the following definition for the sake of brevity.
 
\begin{defn}
  If an arrangement $\arr$ in  \( \CC^2 \) is the union of two nontrivial subarrangements $\arr_1$ and $\arr_2$ with  such that $\arr_1$ and $\arr_2$ intersect in exactly  \( |\arr_1 |\cdot |\arr_2|  \) points of multiplicity two, then we say that $\arr = \arr_1 \cup \arr_2$ is a \textbf{general position partition} of the arrangement. 
\end{defn}

We use the phrase ``general position'' as two distinct lines in the complex plane intersect transversely in a point of multiplicity two or have no point of intersection.

\subsection{Fundamental Group}
From Proposition \ref{prop:hopf} and properties of fundamental groups of spaces, the next lemma follows easily.

\begin{lemma}\label{lem:fundgroup-cone-decone}
  The fundamental group of the cone of an arrangement  \( \arr \) and the fundamental group of the arrangement are related by \[
  \pi_1(M(\carr)) \cong \pi_1(M(\arr)) \times \ZZ.
  \]
\end{lemma}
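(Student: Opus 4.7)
The plan is to deduce the isomorphism as an immediate corollary of Proposition \ref{prop:hopf} together with the functoriality of $\pi_1$ on products. First I would invoke Proposition \ref{prop:hopf} to obtain the diffeomorphism
\[ M(\carr) \cong M(\arr) \times \CC^*. \]
In particular, this is a homotopy equivalence, and both factors $M(\arr)$ and $\CC^*$ are path-connected open subsets of complex affine space.

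Next I would apply the standard fact that the fundamental group of a product of path-connected spaces is the direct product of the fundamental groups of the factors, yielding
\[ \pi_1(M(\carr)) \cong \pi_1(M(\arr)) \times \pi_1(\CC^*). \]
The final step is to identify $\pi_1(\CC^*)$ with $\ZZ$, which follows from the deformation retraction of $\CC^* = \CC \setminus \{0\}$ onto the unit circle $S^1$ (equivalently, from the universal covering $\exp\colon \CC \to \CC^*$). Combining these two isomorphisms gives the desired statement.

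There is essentially no substantive obstacle: the lemma is a routine consequence of Proposition \ref{prop:hopf} and a standard fundamental-group computation. The only care needed is to verify path-connectedness of the factors, which is automatic since complements of hyperplane arrangements are connected open subsets of $\CC^l$ and $\CC^*$ is connected.
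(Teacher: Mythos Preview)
Your proposal is correct and matches the paper's approach exactly: the paper simply states that the lemma follows easily from Proposition~\ref{prop:hopf} and properties of fundamental groups of spaces, which is precisely the argument you have spelled out.
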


As  \( Q(\carr) \in \CC[z_0,z_1,\cdots,z_l] \) is homogeneous, the polynomial also defines an arrangement of projective hyperplanes in  \( \CP^l \), which we will denote by  \( \arr^* \). In the proof of Proposition \ref{prop:hopf}, Orlik and Terao show that  \( M(\arr^*) \in \CP^l \) is diffeomorphic to  \( M(\arr) \in \CC^{l} \).  We then have the following Lemma.

\begin{lemma}\label{lem:fungroups-rel}
  Let  \( \arr \) be an arrangement,  \( \carr \) the cone of the arrangement  \( \arr \) and  \( \arr^* \) the projective arrangement associated to  \( \arr \). Then \[
\pi_1(M(\arr^*) ) \cong \pi_1(M(\arr)), \]
and \[
\pi_1(M(\carr) ) \cong \pi_1(M(\arr^*)) \times \ZZ.
  \]\end{lemma}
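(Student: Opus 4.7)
The plan is to derive both isomorphisms as immediate consequences of material already in place, so the proof should be very short. For the first statement, I would invoke the diffeomorphism $M(\arr^*) \cong M(\arr)$ that is noted in the paragraph preceding the lemma and attributed to Orlik and Terao. Concretely, passing to the affine chart $\{z_0 \neq 0\} \subset \CP^l$ identifies $\CP^l \setminus \{z_0 = 0\}$ with $\CC^l$, and under this identification the traces of the projective hyperplanes of $\arr^*$ other than $\{z_0 = 0\}$ are precisely the affine hyperplanes making up $\arr$ (this is exactly the deconing construction described earlier in the section, with $L = \{z_0 = 0\}$). Since diffeomorphic spaces have isomorphic fundamental groups, applying $\pi_1$ yields $\pi_1(M(\arr^*)) \cong \pi_1(M(\arr))$.

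For the second statement, I would just substitute the first isomorphism into Lemma \ref{lem:fundgroup-cone-decone}. That lemma supplies
\[
\pi_1(M(\carr)) \cong \pi_1(M(\arr)) \times \ZZ,
\]
and replacing the factor $\pi_1(M(\arr))$ by the isomorphic $\pi_1(M(\arr^*))$ gives the desired formula. Lemma \ref{lem:fundgroup-cone-decone} in turn is itself a direct consequence of Proposition \ref{prop:hopf} together with the standard fact that $\pi_1(X \times \CC^*) \cong \pi_1(X) \times \ZZ$, so the entire chain is transparent.

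Because both assertions are essentially repackagings of results stated in the immediately preceding discussion, I do not expect any genuine obstacle. The only point requiring mild care is making the diffeomorphism $M(\arr^*) \cong M(\arr)$ explicit in terms of the defining polynomials: one must verify that dehomogenizing $Q(\carr)$ by setting $z_0 = 1$ recovers $Q(\arr)$, which is true by the construction $Q(\carr) = z_0 Q'$ where $Q'$ is the homogenization of $Q(\arr)$ with respect to $z_0$. With that identification in hand, the lemma follows without further computation.
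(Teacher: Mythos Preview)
Your proposal is correct and matches the paper's approach exactly: the paper presents this lemma without an explicit proof, simply noting in the preceding sentence that Orlik and Terao show $M(\arr^*)$ is diffeomorphic to $M(\arr)$, from which the lemma follows by combining with Lemma~\ref{lem:fundgroup-cone-decone}. Your write-up fleshes out precisely this reasoning.
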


\subsection{Graphs of Fan Type}
In this paper, we will use a graph defined by Fan in \cite{Fan-Directproducts-MR1460414}. We recall the definition of the graph, and then give several theorems that will be useful later.

\begin{definition}
  Let  \( \arr^* = \{L_1, L_2, \dots, L_n\} \) be an arrangement of  \( n \) distinct projectives lines in  \( \CP^2 \). Let  \( M \) denote the set of points in  \( \CP^2 \) where two or more lines from  the arrangement \( \arr^*  \) intersect and let  \( M_3 \) be the subset of  \( M \) consisting of points with multiplicity greater than or equal to three.

  Define a graph  denoted by \( F(\arr) \) called a \textbf{graph of Fan type} of the arrangement \( \arr^* \) as follows. 
  \begin{itemize}
  \item Let the set of points  \( M_3 \) be the vertices of  \( F(\arr) \).
  \item  For each line  \( L_i \in \arr^* \), let  \( S_i = M_3 \cap L_i \). If the set  \( S_i \) is not empty, then choose an ordering of the points in  \( S_i \) given by  \( S_i = \{p_1, p_2, \cdots, p_m\}\). For each  \( j \in \{1,\dots,m-1\} \), choose a simple arc  \( a_j \) in  \( L_i \) that connects  \( p_j \) to  \( p_{j+1} \), avoids all points in  \( M \), and avoids all arcs previously chosen. Let  \( A_i  \) be the set of simple arcs chosen for the line  \( L_i \). The edges of  \( F(\arr) \) will consist of  the set of arcs \( A_1 \cup A_2 \cup \cdots \cup A_n \). 
  \end{itemize}
\end{definition}

The vertices of graphs of Fan type are uniquely defined. However, the edges are not uniquely defined since any line containing more than two multiple points admits many orderings of those points which leads to different sets of edges. This situation motivates the following definition.

\begin{definition}
  Let  \( \mathcal{F}(\arr^*) \) denote the set of all possible graphs of Fan type for the arrangement  \( \arr^*. \)
\end{definition}

We collect some useful properties of graphs of Fan type of an arrangement $\arr^*$ that imply  \( \arr^* \) has a decone with a general position partition.

\begin{lemma}\label{lem:disconnected}
  Let $\arr^*$ be a projective line arrangement in $\CP^2$. If a graph of Fan type  \( F \in \mathcal{F}(\arr^*) \) is disconnected, then  \( \arr^* \) has a decone with a general position partition.\end{lemma}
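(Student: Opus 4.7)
The plan is to exploit the disconnection of $F$ to produce a combinatorial partition of $\arr^*$ whose cross-intersections are all double points, and then to pick a line at infinity so that none of those cross-intersections slips off to infinity upon deconing.

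Fix a disconnected graph of Fan type $F \in \mathcal{F}(\arr^*)$ and split its vertex set $M_3$ into two non-empty unions of connected components $V_1$ and $V_2$; then no edge of $F$ joins $V_1$ to $V_2$. Since the construction of $F$ strings all multiple points of any given line $L \in \arr^*$ along a single path inside $L$, every such point lies in one and the same component of $F$, so $L$ falls into exactly one of the three disjoint classes
\[
\arr_i^* = \{ L \in \arr^* : L \cap V_i \neq \emptyset \}\ \ (i = 1,2), \qquad \arr_0^* = \arr^* \setminus (\arr_1^* \cup \arr_2^*).
\]
Because every vertex of $V_i$ has multiplicity at least three, the three or more lines through it all lie in $\arr_i^*$, so $|\arr_i^*| \geq 3$. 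I would then verify that any intersection of a line of $\arr_1^*$ with a line of $\arr_2^* \cup \arr_0^*$ is a double point of $\arr^*$: otherwise the meeting point $p$ would belong to $M_3 = V_1 \cup V_2$, and whichever of $V_1, V_2$ contains $p$ forces one of the two lines into the wrong class.

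Finally I would pick the deconing line $L$ and form the partition. If $\arr_0^* \neq \emptyset$, take any $L \in \arr_0^*$ and set $\mathcal{C}_1 = \arr_1^*$, $\mathcal{C}_2 = \arr_2^* \cup (\arr_0^* \setminus \{L\})$. If $\arr_0^* = \emptyset$, take any $L \in \arr_1^*$ and set $\mathcal{C}_1 = \arr_1^* \setminus \{L\}$, $\mathcal{C}_2 = \arr_2^*$; the bound $|\arr_1^*| \geq 3$ keeps $\mathcal{C}_1$ non-empty. By the previous paragraph each pair $(L', L'') \in \mathcal{C}_1 \times \mathcal{C}_2$ meets in a double point $p$ of $\arr^*$, and $p$ cannot lie on $L$, for then $L, L', L''$ would make $p$ at least triple. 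Different pairs produce different points, since otherwise three distinct lines of $\arr^*$ would concur at one point. Therefore $\mathcal{C}_1$ and $\mathcal{C}_2$ meet in exactly $|\mathcal{C}_1| \cdot |\mathcal{C}_2|$ double points of $\CC^2$, which is precisely a general position partition of $\decone{L}^*$.

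The main obstacle I expect is the careful choice of $L$: one must decone along a line that carries no cross-intersection yet still leaves both $\mathcal{C}_1$ and $\mathcal{C}_2$ non-empty. The lower bound $|\arr_i^*| \geq 3$, forced by the multiplicity hypothesis baked into $M_3$, is precisely what allows both requirements to be satisfied in each case.
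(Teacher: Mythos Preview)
Your proof is correct and follows essentially the same route as the paper: use a component of $F$ to separate the lines of $\arr^*$ into two classes whose cross-intersections are forced to be double points, then decone along a suitable line. The paper simply takes $\mathcal{C}$ to be the lines through one component and $\mathcal{D}=\arr^*\setminus\mathcal{C}$, then decones along any $L\in\mathcal{C}$; your version refines this by isolating the lines with no multiple points into $\arr_0^*$ and by spelling out explicitly why the cross-intersections cannot fall on $L$ and why distinct pairs give distinct points---details the paper handles in one sentence or leaves implicit.
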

\begin{proof}
  Let $C$ be a connected component of the graph $F$ and let $\mathcal{C}$ denote the set  of all lines in  \( \arr^* \) that contain a vertex in  \( C \). Let \(\mathcal{D} = \arr^* \setminus \mathcal{C} \). The set $\mathcal{D}$ is non-empty as otherwise the graph  \( F \) would be connected.

  Let  \( H_c \in \mathcal{C} \) and  \( H_D \in \mathcal{D} \). Suppose that  \( H_C \) and  \( H_D \) intersect in a point of multiplicity greater than two. (As the lines are in projective space, the multiplicity of their intersection is at least two.) As  \( H_C \in \mathcal{C}\), then by definition the line  \( H_C \) contains a vertex in the connected component  \( C \) of the graph  \( F \). By definition of graphs of Fan type, this means either  \( H_C \cap H_D \) is a vertex of the component  \( C \) or there is a path from another vertex in  \( C \) contained in  \( H_C \) to \( H_C \cap H_D \). In either case, this implies that  \( H_D \) contains a vertex in the connected component  \( C \); therefore,  \( H_D \) is in the set  \( \mathcal{C} \) which contradicts the fact that  \( H_D \in \mathcal{D} \). Therefore,  \( H_C \) and \(H_D\) intersect in a point of multiplicity two.

  Let  \( L \) be an arbitrary line in the arrangement  \( \mathcal{C} \). Then the decone  \( \decone{L} \) has two components  \( \mathbf{d}_L\mathcal{C} \)  and  \(\mathbf{d}_L\mathcal{D} \), the images of  \( \mathcal{C} \) and  \( \mathcal{D} \) under the decone operation. As the arrangements \( \mathcal{C} \) and  \( \mathcal{D} \)  were in general position in projective space, their images are in general postion in affine space after the decone. Therefore,  \( \arr^* \) has a decone with a general position partition.
\end{proof}

\begin{lemma}\label{lem:multi-pt}
  Let  \( \arr^* \)  be a projective arrangement of lines in  \( \CP^2 \) containing at least three lines. If there is a line $H$ in $\arr^*$ such that all multiple points contained in $H$ are points of multiplicity two, then  \( \arr^* \) has a decone with a general position partition.\end{lemma}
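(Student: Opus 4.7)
The plan is to decone by any line $L \in \arr^* \setminus \{H\}$ (such an $L$ exists since $|\arr^*| \geq 3$) and take as the two pieces the singleton $\mathcal{C}_1 = \{\mathbf{d}_L H\}$ together with $\mathcal{C}_2 = \{\mathbf{d}_L L' : L' \in \arr^* \setminus \{H,L\}\}$. Trivially $\decone{L}^* = \mathcal{C}_1 \cup \mathcal{C}_2$, and both pieces are non-empty because $|\mathcal{C}_2| = |\arr^*| - 2 \geq 1$. So the entire content of the proof is to verify that the intersection of $\mathcal{C}_1$ and $\mathcal{C}_2$ in $\CC^2$ consists of exactly $|\mathcal{C}_1|\cdot|\mathcal{C}_2| = |\arr^*|-2$ points of multiplicity two.

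The hypothesis that every multiple point on $H$ has multiplicity two is the tool that makes every clause of this go through. For each $L' \in \arr^* \setminus \{H,L\}$, the projective intersection $H \cap L'$ is a point of multiplicity exactly two on $H$. First, $H \cap L' \notin L$: otherwise $H$, $L$, and $L'$ would be concurrent, giving a multiplicity at least three point on $H$, contradicting the hypothesis. Consequently $\mathbf{d}_L H$ and $\mathbf{d}_L L'$ are not parallel and their affine intersection $\mathbf{d}_L H \cap \mathbf{d}_L L'$ is a genuine point of $\CC^2$. Second, no other line of $\decone{L}^*$ passes through this point, since a third line $L'' \in \arr^* \setminus \{H,L\}$ through $H \cap L'$ would again force multiplicity at least three on $H$; so the intersection point has multiplicity two in $\decone{L}^*$. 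Third, the intersection points produced by different choices of $L'$ are distinct, for the same concurrency-on-$H$ reason.

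Counting yields exactly $|\arr^*|-2$ multiplicity-two intersection points between $\mathcal{C}_1$ and $\mathcal{C}_2$, matching $|\mathcal{C}_1|\cdot|\mathcal{C}_2|$, which by definition gives a general position partition of the decone $\decone{L}^*$. The only obstacle is bookkeeping — keeping careful track of which intersection points get pushed to infinity by the deconing — but the multiplicity-two hypothesis on $H$ rules out every bad configuration in one stroke, so no serious combinatorial argument or induction is needed.
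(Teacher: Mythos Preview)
Your proof is correct and follows essentially the same approach as the paper: decone by any line $L \neq H$ and partition into $\{\mathbf{d}_L H\}$ versus the rest. Your argument is in fact more careful than the paper's, which simply asserts that double points on $H$ remain double points after deconing; you explicitly rule out the possibility that $H \cap L'$ lies on $L$ (i.e., that $\mathbf{d}_L H$ and $\mathbf{d}_L L'$ become parallel), which is needed to get the full count $|\mathcal{C}_1|\cdot|\mathcal{C}_2|$ of intersection points required by the definition of a general position partition.
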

\begin{proof} Let  \( L  \) denote any line in  \( \arr^* \) that is not $H$. Then the decone  \( \decone{L} \) may be written as two subarrangements  \( \mathcal{H}=\{\mathbf{d}_LH\} \) and  \( \decone{L} \setminus \mathcal{H} \). As all lines in $\arr^*$ intersect \( H \) in double points, the image of these lines will also intersect the image of  \( H \) in double points in $\CC^2$. Therefore,  \( \mathcal{H} \) and  \(\decone{L} \setminus \mathcal{H} \) are in general position. Whereby, we conclude that  \( \arr^* \) has a decone with a general position partition.
\end{proof}

\begin{lemma}\label{lem:simple} Suppose that all graphs in $\mathcal{F}(\arr^*)$ are connected and every hyperplane in $\arr^*$ contains a point of multiplicity at least three. If there is a graph $F \in \mathcal{F}(\arr^*)$ such that $F$ has an edge that is not part of a simple circuit, then  \( \arr^* \) has a decone with a general position partition. \end{lemma}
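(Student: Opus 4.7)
The hypothesis supplies a graph $F \in \mathcal{F}(\arr^*)$ containing an edge $e$ that lies in no simple circuit; equivalently, $e$ is a bridge of $F$, so since $F$ is connected, removing $e$ disconnects it into exactly two components $C_1$ and $C_2$. The edge $e$ is a simple arc along some line $L \in \arr^*$ joining two of its multiple points $p_1 \in C_1$ and $p_2 \in C_2$, each of multiplicity at least three. My plan is to decone by $L$ and use the components $C_1, C_2$ to split $\arr^* \setminus \{L\}$ into two transverse families.

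The first step is to classify each line $H \in \arr^* \setminus \{L\}$. By hypothesis, $H$ contains a multiple point, so $H$ contributes at least one vertex to $F$. I claim all vertices lying on $H$ belong to the same component of $F - e$: any two such vertices are joined by a path in $F$ built from arcs along $H$, and since $e$ is an arc on $L \neq H$, that path avoids $e$ and therefore cannot cross between $C_1$ and $C_2$. Using this, set $\mathcal{D}_i = \{ H \in \arr^* \setminus \{L\} : \text{the multiple points on } H \text{ lie in } C_i \}$ for $i = 1, 2$, which gives a disjoint cover $\arr^* \setminus \{L\} = \mathcal{D}_1 \sqcup \mathcal{D}_2$. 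Each $\mathcal{D}_i$ is nonempty because $p_i$ has multiplicity at least three, forcing at least two lines of $\arr^* \setminus \{L\}$ through $p_i$, and each such line is then forced into $\mathcal{D}_i$ by the previous claim.

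To finish, I must verify that $\decone{L}^* = \mathbf{d}_L\mathcal{D}_1 \cup \mathbf{d}_L\mathcal{D}_2$ is a general position partition in $\CC^2$. Fix $H_1 \in \mathcal{D}_1$ and $H_2 \in \mathcal{D}_2$. If $H_1 \cap H_2$ had multiplicity at least three in $\arr^*$, it would be a vertex of $F$ lying on $H_1$, hence in $C_1$, and on $H_2$, hence in $C_2$, contradicting $C_1 \cap C_2 = \emptyset$; and if $H_1 \cap H_2$ lay on $L$, the three lines $L, H_1, H_2$ would meet there, contradicting double multiplicity. Hence every cross-pair meets at a genuine double point of $\arr^*$ off of $L$, so after deconing $H_1$ and $H_2$ intersect transversely in $\CC^2$. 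The same triple-point obstruction shows the $|\mathcal{D}_1| \cdot |\mathcal{D}_2|$ cross-intersections are distinct. The main obstacle is purely organizational: extracting the correct partition from the bridge $e$. Once $C_1, C_2$ are in hand, every multiplicity check is forced by the Fan graph structure together with the bridge property.
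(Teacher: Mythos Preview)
Your proof is correct and follows essentially the same route as the paper's: identify the bridge edge $e$ on a line $L$, split $F\setminus\{e\}$ into two components, partition $\arr^*\setminus\{L\}$ according to which component carries the vertices of each line, and verify that cross-pairs meet only in double points so that deconing by $L$ yields a general position partition. Your write-up is in fact slightly more careful than the paper's in two places---you explicitly rule out the possibility that a cross-intersection $H_1\cap H_2$ lies on $L$, and you note why the $|\mathcal{D}_1|\cdot|\mathcal{D}_2|$ cross-intersections are pairwise distinct---but these are minor elaborations of the same argument rather than a different approach.
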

\begin{proof} Recall that a simple circuit is a path in a graph such that the first and last vertices of the path are the same, no vertices are repeated (except for the first vertex as the last vertex) and no edges are repeated.

Let  \( L\) be the line in  \( \arr^* \) containing the edge  \( e \) that is not part of a simple circuit in  \( F \). Let  \( v \) and  \( w \) denote the vertices of the edge  \( e \) . The graph  \( F \setminus \{e\} \) has two connected components. If not, then there is a simple path  \( P \) from  \( v \) to  \( w \). Combining the path  \( P \) with the edge  \( e \) would create a simple circuit containing  \( e \) which is a contradiction.

Denote the components of  \( F \setminus \{e\} \) by  \( F_v \) and  \( F_w \) where  \( F_v \) is the component containing  \( v \) and  \( F_w \) is the component containing  \( w \). Let  \( \mathcal{B}_v \) denote the set of lines in  \( \arr^* \) containing vertices in  \( F_v \) except for  \( L \). Likewise let \( \mathcal{B}_w \) denote the set of lines in  \( \arr^* \) containing vertices in  \( F_w \) except for  \( L \).  Since every line in  \( \arr^* \) contains a higher order multiple point, each line besides  \( L \) must be in either   \( \mathcal{B}_v \) or  \( \mathcal{B}_w \). Therefore,  \( \arr^* = \mathcal{B}_w ~\dot{\cup}~ \mathcal{B}_v ~\dot{\cup}~ \{L\}\). The sets of lines \( \mathcal{B}_v \) and \( \mathcal{B}_w \) are disjoint. If they have a line, $H$, in common, then $H$ would contain a vertex that is connected to  \( v \) in  \( F_v \) and a vertex that is connected to  \( w \) in  \( F_w \). By definition of graphs of Fan type, there is a path between these two vertices, hence the vertices  \( v \) and  \( w \)  are connected which is a contradiction.

Let  \( H_v \in \mathcal{B}_v \) and  \( H_w \in \mathcal{B}_w \) be chosen arbitrarily. Suppose that  \( H_v \) and  \( H_w \) intersect in a point of multiplicity greater than two in  \( \arr^* \). (We know the lines intersect in a point of at least multiplicity two as they are in  \( \CP^2 \).) Then  \( z = H_v \cap H_w \) will be a vertex in all graphs of Fan type. As  \( H_v \in \mathcal{B}_v  \), by definition there is a path from the point  \( v \) to  \( z \) in the graph  \( F \setminus \{e\} \). Likewise, as  \( H_w \in mathcal{B}_w \) there is a path from the point  \( z \) to the  \( w \) in  \( F \setminus \{e\}. \) Combining these paths creates a path from  \( v \) to  \( w \). This yields a contradiction as  \( v \) and  \( w \) are in distinct connected components. Therefore  \( H_v \) and  \( H_w \) intersect in a point of multiplicity two. As these lines were chosen arbitrarily, we see that  \( \mathcal{B}_v  \) and  \( \mathcal{B}_w  \) intersect in general position in  \( \CP^2 \).

  Therefore, the arrangement  \( \decone{L} \) has two subarrangements  \( \mathbf{d}\mathcal{B}_v \) and  \(\mathbf{d} \mathcal{B}_w \) the images of  \( \mathcal{B}_v \) and  \( \mathcal{B}_w \) respectively. These arrangements intersect in general position, therefore  \( \arr^* \) has a decone with a general position partition.
\end{proof}

\begin{cor}
If $\arr^*$ does not have a decone with a general position partition, then for every $F \in \mathcal{F}(\arr^*)$ it follows that $F$ is connected, every edge in $F$ is contained in a simple circuit and every hyperplane in $\arr^*$ contains a vertex in $F$.
\end{cor}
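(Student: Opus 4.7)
The plan is simply to recognize that this corollary is the conjunction of the contrapositives of Lemmas \ref{lem:disconnected}, \ref{lem:multi-pt}, and \ref{lem:simple}, applied in sequence. Assuming $\arr^*$ admits no decone with a general position partition, each lemma rules out one way in which a graph $F \in \mathcal{F}(\arr^*)$ could fail to have the required structure.

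First, I would apply the contrapositive of Lemma \ref{lem:disconnected} to conclude immediately that every $F \in \mathcal{F}(\arr^*)$ is connected. Second, the contrapositive of Lemma \ref{lem:multi-pt} rules out the existence of any line in $\arr^*$ all of whose multiple points are double, so every $H \in \arr^*$ contains at least one point of multiplicity $\geq 3$; by the definition of the vertex set $M_3$, this point is a vertex of every $F \in \mathcal{F}(\arr^*)$, which is the third conclusion of the corollary. Finally, these two facts together supply exactly the hypotheses of Lemma \ref{lem:simple}, so its contrapositive yields that every edge of every $F \in \mathcal{F}(\arr^*)$ lies on a simple circuit.

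There is essentially no obstacle beyond correctly ordering the three invocations and verifying that the output of each step is an input of the next. A minor subtlety worth noting is that Lemma \ref{lem:multi-pt} is stated for arrangements with at least three lines; for $|\arr^*| \leq 2$ every graph of Fan type is empty and all three conclusions of the corollary hold vacuously, so no separate argument is needed for that degenerate case.
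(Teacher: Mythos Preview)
Your approach is correct and is precisely what the paper intends: the corollary is stated in the paper immediately after Lemma~\ref{lem:simple} with no proof, as an evident consequence of the contrapositives of Lemmas~\ref{lem:disconnected}, \ref{lem:multi-pt}, and~\ref{lem:simple}, invoked in the order you describe.

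One small correction to your aside on the degenerate case. For \(|\arr^*| \leq 2\) the graph \(F\) is indeed empty, but the third conclusion of the corollary, ``every hyperplane in \(\arr^*\) contains a vertex in \(F\),'' is \emph{not} vacuous: it is a universal statement over the (nonempty) set of lines, and it is false when \(F\) has no vertices. On the other hand, such an \(\arr^*\) also fails to have a decone with a general position partition (a decone has at most one line, which cannot be split into two nontrivial subarrangements), so the hypothesis of the corollary is satisfied and the corollary as literally stated fails for these tiny arrangements. The honest fix is to note that Lemma~\ref{lem:multi-pt} already carries the standing assumption \(|\arr^*| \geq 3\), and the corollary should be read under that same hypothesis; the paper simply leaves this implicit.
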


\begin{theorem}\label{thm:norepeats}
  Let  \( \arr^* \) be an arrangement of projective lines. If there is an edge  \( e \) on a line  \( L \) such that for all  \(  F \in \mathcal{F}(\arr^*) \) with  \( e \in F \) every simple circuit containing  \( e \) has at least two edges contained in the line  \( L \), then \( \arr^* \) has a decone with a general position partition.
\end{theorem}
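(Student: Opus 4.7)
The plan is to decone $\arr^*$ along $L$ itself and exhibit an explicit general position partition of $\decone{L}$, in the spirit of Lemma~\ref{lem:simple}. Write $v,w$ for the endpoints of the edge $e$, and introduce an auxiliary graph $H$ whose vertices are the points of $M_3$ and in which two vertices are adjacent precisely when they lie on a common line of $\arr^*\setminus\{L\}$. This graph is intrinsic to $\arr^*$ and does not depend on any choice in $\mathcal{F}(\arr^*)$.

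The crucial step is to show that $v$ and $w$ lie in distinct connected components of $H$. I would argue by contradiction: suppose a path $v=z_0,z_1,\dots,z_k=w$ exists in $H$, with each consecutive pair $z_i,z_{i+1}$ lying on a common line $L_i'\neq L$. I construct a graph $F\in\mathcal{F}(\arr^*)$ with $e\in F$, which is possible since one may order the multiple points on $L$ so that $v$ and $w$ are adjacent. On each $L_i'$ the edges of $F$ form a path through all multiple points of $L_i'$, so $z_i$ and $z_{i+1}$ are joined inside $F$ by a sub-path of $L_i'$-edges. Concatenating these sub-paths yields a walk from $v$ to $w$ in $F$ that uses no edge of $L$; extracting a simple sub-path and adjoining $e$ produces a simple circuit through $e$ with exactly one edge on $L$, contradicting the hypothesis.

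Having separated $v$ and $w$, I would transfer the separation to the line-graph $K_L$ on $\arr^*\setminus\{L\}$ whose edges join pairs of lines sharing a vertex of $M_3$. A $K_L$-path from a line through $v$ to a line through $w$ would yield, via its shared $M_3$-vertices, an $H$-path from $v$ to $w$; hence the lines through $v$ and those through $w$ lie in different components of $K_L$. Let $\mathcal{B}_v$ be the $K_L$-component containing the lines through $v$ (other than $L$), and $\mathcal{B}_w=(\arr^*\setminus\{L\})\setminus\mathcal{B}_v$. Both pieces are non-empty since $v$ and $w$ each have multiplicity at least three. For $L_1\in\mathcal{B}_v$ and $L_2\in\mathcal{B}_w$ the two lines share no point of $M_3$, so their intersection has multiplicity exactly two; moreover, if this intersection lay on $L$ then $L,L_1,L_2$ would be concurrent and the point would belong to $M_3$, a contradiction. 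Hence after deconing along $L$ these $|\mathcal{B}_v|\cdot|\mathcal{B}_w|$ distinct double points persist in $\CC^2$, yielding the required general position partition.

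The main obstacle is the middle step: one must construct $F$ so that the hypothesis actually applies and then extract an honest simple circuit with exactly one edge on $L$ from the walk that the $H$-path supplies. Once $v$ and $w$ are known to lie in different components of $H$, the remainder is essentially the same bookkeeping as in the proof of Lemma~\ref{lem:simple}.
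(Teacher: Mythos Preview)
Your argument is correct and is essentially the paper's proof: decone along $L$ and partition $\arr^*\setminus\{L\}$ according to connectivity to $v$ once $L$ is removed, the hypothesis forcing $v$ and $w$ apart. The only cosmetic difference is that the paper works inside a single Fan graph---fixing any $F\ni e$, deleting the set $E$ of all $L$-edges to form $F^*=F\setminus E$, and observing that $v$ and $w$ lie in distinct components of $F^*$---whereas you phrase the same separation via the intrinsic graphs $H$ and $K_L$; since the connected components of your $H$ coincide with those of $F^*$, the two routes are equivalent.
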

\begin{proof}From Lemma \ref{lem:disconnected}, we know that if any choice of Fan's graph is disconnected, then the conclusion follows. Therefore, we assume that any choice of graph of Fan type from the collection \( \mathcal{F}(\arr^*) \) is connected. Let \(  F \in \mathcal{F}(\arr^*) \) be any choice of graph of Fan type that contains the edge  \( e \) as described in the statement of the theorem. Let  \( v \) and  \( w \) denote the vertices of the edge  \( e \). 

  Let  \( E \) denote the set of edges in  \( F \) that are contained in the line  \( L \). Then, \( F^* = F \setminus E \) is a subgraph of  \( F \) . If  \( F^* \) is connected, then there is a simple path from  \( v \) to  \( w \). However, combining this path with the edge  \( e \) would create a simple circuit in the graph  \( F \), but this contradicts the hypothesis that every simple circuit containing the edge  \( e \) in the graph  \( F \) has at least two edges contained in the line  \( L \) since the path comes from  \( F^*. \) Therefore the graph  \( F^* \) has at least two connected components and  \( w \) and  \( v \) are in different components. 

  Let  \( \mathcal{B}_v \) denote the set of lines in  \( \arr^* \setminus \{L\} \) that contain vertices with paths to  \( v \) in  \( F^* \). Let  \( \mathcal{B}_w =\left( \arr^* \setminus \{L\} \right) \setminus \mathcal{B}_v\).   Both of these arrangements are non-empty as  \( v \) and  \( w \) represent points of multiplicity greater than three. Let  \( H_v  \in \mathcal{B}_v\) and  \( H_w \in \mathcal{B}_w \) be chosen arbitrarily. Suppose  \( H_v \cap H_w \) is a point of multiplicity greater than two. Then by definition of  \( H_v \in \mathcal{B}_v \) there is a path from the vertex  \( H_v \cap H_w \) to  \( v \). Therefore, by definition of the set  \( \mathcal{B}_v \), we have  \( H_w \in \mathcal{B}_v \). However, this is a contradiction as  \( H_w \in \mathcal{B}_w \). Therefore, the lines  \( H_v \) and  \( H_w \) intersect in a point of multiplicity two in the arrangement  \( \arr^* \). As these lines were chosen arbitrarily, the arrangements  \( \mathcal{B}_v \) and  \( \mathcal{B}_v \) are in general position. Using  \( L \) as the line at infinity will result in the arrangement  \( \decone{L}^*   \) that has a general position partition given by  \(  \mathbf{d}_L\mathcal{B}_v \cup \mathbf{d}_L\mathcal{B}_w \).
\end{proof}

\subsection{Characteristic Varieties} 

The characteristic varieties can be defined for any space that is homotopy equivalent to CW-complex with finitely many cells in each dimension \cite{Suciu-arXiv:1111.5803}. In this paper, we restrict out attention to spaces that are complements of hyperplane arrangements. 

Let  \( \arr = \{H_1,\dots, H_n\}\) be an arrangement of hyperplanes in  \( \CC^l \). As the fundamental group  \( \pi_1(M(\arr)) \) has torsion-free abelianization with rank  \( n \), the character variety  \( \Hom(\pi_1(M(\arr)), \CC^*) \) is identified with  \( (\CC^*)^n \). A generating set for a presentation of the fundamental group \( \pi_1(M(\arr)) \) is given by  \( \{\gamma_1, \dots, \gamma_n\}\) where each  \( \gamma_i\) is a meridional loop around  \( H_i \) whose orientation is given by the complex structure.

\begin{definition}[\cite{Suciu-arXiv:1111.5803}]\label{def:charvar}
  The \textbf{characteristic varieties} of the arrangement \( \arr \)  are the cohomology jumping loci of  \( M(\arr) \) with coefficients in the rank 1 local systems over  \( \CC^* \):
  \[
  V_d^i(\arr) : = \{ \mathbf{t} \in (\CC^*)^n | \dim_{\CC} H^i(M(\arr), \CC_{\mathbf{t}}) \geq d \}
  \] where   \( \mathbf{t} = \{t_1, \dots, t_n\} \) determines a representation  \( \pi_1(M(\arr)) \to \CC^* \),  \( \gamma_i \mapsto t_i \) that induces a rank one local system  \( \CC_{\mathbf{t}} \).
\end{definition}

In this paper, we shall only be concerned with the varieties where  \( i=1 \) and  \( d=1 \). The characteristic varieties  \( V_1^1(\arr) \) depends (up to a monomial automorphism of the algebraic torus  \( (\CC^*)^{n} \)) only on the fundamental group  \(G=\pi_1(M(\arr))\) (Subsection 3.1, \cite{Suciu-arXiv:1111.5803}), therefore we will also use the notation \( V_1^1(G) \). 

\subsubsection{Direct Products of Groups}
Let  \( \arr \) denote an arrangement  \( n \) of hyperplanes and let  \( G = \pi_1(M(\arr)) \) denote the fundamental group of the complement of the arrangement. Further suppose that  \( G \cong G_1 \times G_2 \) where  \( G_1 \) and  \( G_2 \) have ranks  \( n_1 \geq 1 \) and  \( n_2 \geq 1\) respectively. As  \( G \) may be finitely presented with rank equal to the number of hyperplanes in the arrangement, it follows that  \( G_1 \) and  \( G_2  \) may also be finitely presented and  \( n = n_1 + n_2 \). The characteristic variety  \( V_1^1(G_i) \) is a subset of the algebraic torus \( (\CC^*)^{n_i}  \).  

\begin{theorem}\label{thm:group-to-charvar-union}
  If $G \cong G_1 \times G_2$ is the fundamental group of an arrangement  \( \arr \) of  \( n \) hyperplanes, then the characteristic varieties $V_1^1(\arr)$ is isomorphic to $$(V_1^1(G_1) \times \mathbf{1}^{n_2}) \cup (\mathbf{1}^{n_1} \times V_1^1(G_2)).$$
\end{theorem}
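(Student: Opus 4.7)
The plan is to translate the product structure of $G$ into a Künneth decomposition of cohomology with rank one local coefficients. The direct product decomposition $G \cong G_1 \times G_2$ induces a natural isomorphism of character tori $\Hom(G,\CC^*) \cong \Hom(G_1,\CC^*) \times \Hom(G_2,\CC^*) = (\CC^*)^{n_1} \times (\CC^*)^{n_2}$, under which any character $\mathbf{t}$ of $G$ factors uniquely as a pair $(\mathbf{t}_1,\mathbf{t}_2)$, and the associated rank one local system $\CC_{\mathbf{t}}$ corresponds to the external tensor product of $\CC_{\mathbf{t}_1}$ and $\CC_{\mathbf{t}_2}$. Because $V_1^1(\arr)$ depends only on $G$ up to a monomial automorphism of $(\CC^*)^n$, I would compute the relevant $H^1$ directly from $G$ rather than from the specific CW-model $M(\arr)$; recall that first cohomology with local coefficients depends only on the fundamental group.

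The main computational step is the Künneth formula
\[
H^1(G_1 \times G_2,\ \CC_{(\mathbf{t}_1,\mathbf{t}_2)}) \cong \bigl(H^1(G_1,\CC_{\mathbf{t}_1}) \otimes H^0(G_2,\CC_{\mathbf{t}_2})\bigr) \oplus \bigl(H^0(G_1,\CC_{\mathbf{t}_1}) \otimes H^1(G_2,\CC_{\mathbf{t}_2})\bigr),
\]
which I would justify by identifying a $K(G,1)$ with $K(G_1,1) \times K(G_2,1)$ and applying the topological Künneth theorem with field coefficients so that no Tor term appears. The second ingredient is the elementary observation that $H^0(G_i,\CC_{\mathbf{t}_i}) = \CC$ when $\mathbf{t}_i = \mathbf{1}$ and $H^0(G_i,\CC_{\mathbf{t}_i}) = 0$ otherwise, which follows directly from the definition of invariants under a rank one representation.

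With these two tools in hand, the proof becomes a case analysis on the triviality of the two factors of $\mathbf{t}$. If both $\mathbf{t}_1$ and $\mathbf{t}_2$ are nontrivial, both Künneth summands vanish and $\mathbf{t} \notin V_1^1(G)$. If $\mathbf{t}_2 = \mathbf{1}$ while $\mathbf{t}_1 \neq \mathbf{1}$, the decomposition collapses to $H^1(G,\CC_{\mathbf{t}}) \cong H^1(G_1,\CC_{\mathbf{t}_1})$, so $\mathbf{t} \in V_1^1(G)$ if and only if $\mathbf{t}_1 \in V_1^1(G_1)$, placing such $\mathbf{t}$ in the stratum $V_1^1(G_1) \times \mathbf{1}^{n_2}$. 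The symmetric argument handles the case $\mathbf{t}_1 = \mathbf{1}$. Finally, the fully trivial character $(\mathbf{1}^{n_1}, \mathbf{1}^{n_2})$ lies in both strata of the union, and it lies in $V_1^1(G)$ because $n_1, n_2 \geq 1$ forces $\dim H^1(G_i,\CC) \geq 1$. Taking the union of the cases gives the claimed description of $V_1^1(\arr)$.

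The main obstacle I anticipate is not the cohomology calculation itself but the bookkeeping required to align coordinates: one must verify that the splitting $(\CC^*)^n \cong (\CC^*)^{n_1} \times (\CC^*)^{n_2}$ arising from meridional generators of the two factor fundamental groups agrees, up to the monomial automorphism indeterminacy inherent in the definition of $V_1^1(G)$, with the algebraic splitting of $\Hom(G,\CC^*)$ coming from $G \cong G_1 \times G_2$. Once that identification is made explicit, the case analysis above closes the argument.
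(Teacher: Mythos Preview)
Your proposal is correct and follows essentially the same route as the paper: both arguments reduce to the product formula $V_1^1(M_1\times M_2)=(V_1^1(M_1)\times\mathbf{1}^{n_2})\cup(\mathbf{1}^{n_1}\times V_1^1(M_2))$ together with the fact that $V_1^1$ depends only on $\pi_1$ up to a monomial automorphism of the character torus. The only difference is that the paper quotes this product formula as Theorem~3.2 of Cohen--Suciu, whereas you reprove it directly via the K\"unneth decomposition and the vanishing of $H^0$ at nontrivial characters; your case analysis is exactly what underlies the cited result.
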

\begin{proof}
  Let  \( M,M_1\) and  \( M_2 \) be the canonical CW complexes generated by finite presentations for  \( G,G_1 \) and  \( G_2 \) respectively. Then  \( M \) is homotopy equivalent to the product  \( M_1 \times M_2 \). By Theorem 3.2 in \cite{CS-CharVars-MR1692519}, \[
  V_1^1(M_1 \times M_2) = (V_1^1(M_1) \times \mathbf{1}^{n_2}) \cup (\mathbf{1}^{n_1} \times V_1^1(M_2)) \subseteq (\CC^*)^{n_1} \times  (\CC^*)^{n_2}.
  \]
  As the characteristic varieties depend only on the group, there are monomial automorphisms of the algebraic torus \( (\CC^*)^{n} \) such that 
\begin{align*}
V_1^1(\arr) &\cong V_1^1(M) \\
&\cong (V_1^1(M_1) \times \mathbf{1}^{n_2}) \cup (\mathbf{1}^{n_1} \times V_1^1(M_2)) \\
&\cong (V_1^1(G_1) \times \mathbf{1}^{n_2}) \cup (\mathbf{1}^{n_1} \times V_1^1(G_2)).\\
\end{align*}
Thus, the varieties are isomorphic as desired.\end{proof}

\begin{corollary}\label{cor:prod-union}
  Let  \( \arr^* \) be a projective arrangement of lines in  \( \CP^2 \). If \( \pi_1(M(\arr^*)) \cong G_1 \times G_2 \) then $V_1^1(M(\carr))$ is isomorphic to $$(V_1^1(G_1) \times \mathbf{1}^{n_2} \times 1) \cup (\mathbf{1}^{n_1} \times V_1^1(G_2) \times 1) \cup (\mathbf{1}^{n_1 + n_2} \times 1).$$
\end{corollary}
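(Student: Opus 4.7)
The plan is to combine Lemma \ref{lem:fungroups-rel} with two applications of Theorem \ref{thm:group-to-charvar-union}. By Lemma \ref{lem:fungroups-rel}, the fundamental group of the cone satisfies
\[
\pi_1(M(\carr)) \cong \pi_1(M(\arr^*)) \times \ZZ \cong G_1 \times G_2 \times \ZZ,
\]
and $\carr$ has $n_1+n_2+1$ hyperplanes; the extra hyperplane $z_0 = 0$ introduced by the coning operation contributes the $\ZZ$ factor, whose meridian is the final coordinate of the ambient torus $(\CC^*)^{n_1+n_2+1}$.

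Regrouping this decomposition as $(G_1 \times G_2) \times \ZZ$ with ranks $n_1+n_2$ and $1$, I would apply Theorem \ref{thm:group-to-charvar-union} to obtain
\[
V_1^1(M(\carr)) \cong \bigl(V_1^1(G_1 \times G_2) \times \mathbf{1}^1\bigr) \cup \bigl(\mathbf{1}^{n_1+n_2} \times V_1^1(\ZZ)\bigr).
\]
A direct computation shows $V_1^1(\ZZ) = \{1\}$, because the twisted cohomology $H^1(S^1, \CC_t)$ vanishes whenever $t \neq 1$. Consequently the second piece collapses to the single point $\mathbf{1}^{n_1+n_2} \times 1$, accounting for the third summand in the statement.

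Applying Theorem \ref{thm:group-to-charvar-union} a second time, now to the factor $G_1 \times G_2$ itself, yields
\[
V_1^1(G_1 \times G_2) \cong \bigl(V_1^1(G_1) \times \mathbf{1}^{n_2}\bigr) \cup \bigl(\mathbf{1}^{n_1} \times V_1^1(G_2)\bigr).
\]
Substituting this into the previous decomposition and appending the coordinate $1$ corresponding to the cone hyperplane gives exactly the claimed formula. There is no essential obstacle: the only step requiring attention is tracking the monomial automorphisms of the ambient algebraic tori across both applications, so that the coordinate orderings corresponding to the meridians in $\arr^*$ and to the cone hyperplane line up consistently. This bookkeeping is routine because Theorem \ref{thm:group-to-charvar-union} is invoked once per split of the direct product.
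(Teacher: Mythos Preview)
Your argument is correct and matches the paper's own proof exactly: the paper also invokes Lemma~\ref{lem:fungroups-rel} to write $\pi_1(M(\carr)) \cong G_1 \times G_2 \times \ZZ$, notes that $V_1^1(\ZZ)=\{1\}$, and then applies Theorem~\ref{thm:group-to-charvar-union} twice. Your write-up simply unpacks the two applications in more detail than the paper does.
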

\begin{proof}
  From Lemma \ref{lem:fungroups-rel},  \( \pi_1(M(\carr) ) \cong \pi_1(M(\arr^*)) \times \ZZ \cong G_1 \times G_2 \times \ZZ \). Also,  \( V_1^1(\ZZ) = \{1\} \). Therefore, using Theorem \ref{thm:group-to-charvar-union} twice, the conclusion follows.
\end{proof}

\subsection{Resonance Varieties}
Let $\arr$ be an arrangement in $\CC^l$. Then, the cone of the arrangement $\carr$ is an arrangement in $\CC^{l+1}$. We denote the projectivization of the arrangement in $\CP^l$ by $\arr^*$. The intersection poset is the set of non-empty intersections of hyperplanes in the arrangement and is denoted by \[
L(\arr) := \{ \cap_{H \in \mathcal{B}} H \vert \mathcal{B} \subseteq \mathcal{A} \}.
\] The rank of an element  \( X \in L(\arr) \) is equal to the codimension of the space  \( X \) in  \( \CC^l \). The rank  \( n \) elements are denoted by  \( L_n(\mathcal{A}) \). 

Falk introduced the resonance varieties associated to an arrangement in \cite{Falk-ArrsAndCohom-MR1629681}. We recall the basic notation and ideas here  and direct the interested reader to Falk's original work for more information. Let $A=A(\arr)$ be the graded Orlik-Solomon algebra associated to an arrangement generated by  \( \{a_1,\dots,a_n \} \) where  \( a_i \) is associated to  \( H_i \in \arr \). If we fix an element $\omega \in A^1$, then the map $d_\omega \colon A^p \to A^{p+1}$ defined by left multiplication creates a complex $(A,d_\omega)$ as $d_\omega \circ d_\omega=0$. Notice that $\omega = \sum\limits_{i=1}^n \lambda_i a_i$ where $\lambda_i \in \CC$. Therefore, we associate each $\omega$ with a vector $\lambda \in \CC^n$.

As $(A,d_\omega)$ is a complex, we denote the cohomology of the complex by $H^p(A,\omega)= H^p(A,\lambda)$. Finally, we define the \textbf{resonance varieties} associated to the arrangement by \[ 
R_p(\Arr) = \{ \lambda \in \CC^n | H^p(A(\arr),\lambda) \neq 0\}.
\] 

The following definition follows from Lemma 3.14 in \cite{Falk-ArrsAndCohom-MR1629681}:

\begin{definition} Let $\arr = \{H_i\}_{i=1}^n \in \CC^d$ be a central arrangement of hyperplanes. For each $X \in L_2(\arr)$ with   \( X\) contained in at least three hyperplans in  \( \arr \), the \textbf{local component of $X$} in $R_1(\arr)$ is given by

$$R_1(\arr, X) = \left\{ \lambda \in \CC^n \Big| \sum\limits_{i=1}^n \lambda_i =0 \text{ and } \lambda_i = 0 \text{ if } X \nsubseteq H_i \right\}.$$
We will denote this component by the simpler notation \( R_1^{loc}(X)\) when there is not confusion about the arrangement.
\end{definition}

The next lemma follows easily from the definition as each vertex corresponds to a point in $\arr^*$ of multiplicity at least three, hence a rank two component of $L(\carr)$.

\begin{lemma}
  Let  \( \arr^*\) be a projective arrangement of lines in  \( \CP^2. \) Each vertex in a graph of Fan type of $\arr^*$ induces a non-trivial local component of the resonance varieties in $R_1(\carr)$.
\end{lemma}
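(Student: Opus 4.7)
The plan is to turn each Fan-graph vertex into a rank-two element of $L(\carr)$ and then read off a non-trivial local component of $R_1(\carr)$ directly from the definition. By construction, a vertex of any $F \in \mathcal{F}(\arr^*)$ is a point $p \in \CP^2$ lying on at least three lines of $\arr^*$, say $L_{i_1},\dots,L_{i_m}$ with $m \geq 3$. Under the coning correspondence, the hyperplanes of $\carr$ in $\CC^3$ are in bijection with the lines of $\arr^*$; write $H_i$ for the hyperplane corresponding to $L_i$. The point $p$ lifts to a one-dimensional linear subspace $X_p = H_{i_1} \cap \cdots \cap H_{i_m}$, and since two distinct two-planes through the origin of $\CC^3$ meet in codimension exactly two, $X_p$ is a rank-two flat of $L(\carr)$ with $X_p \subseteq H_i$ if and only if $i \in \{i_1,\dots,i_m\}$.

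Having produced $X_p \in L_2(\carr)$ lying in at least three hyperplanes, I would apply the definition of the local component given just above the lemma: the set
\[
R_1^{loc}(X_p) = \left\{ \lambda \in \CC^n \,\Big|\, \sum_{i=1}^{n} \lambda_i = 0 \text{ and } \lambda_i = 0 \text{ whenever } X_p \nsubseteq H_i \right\}
\]
is a local component of $R_1(\carr)$ by the cited Lemma 3.14 of Falk. Non-triviality is immediate from a dimension count: only the $m$ coordinates $\lambda_{i_1},\dots,\lambda_{i_m}$ are unconstrained, and they are subject to the single linear equation $\lambda_{i_1} + \cdots + \lambda_{i_m} = 0$. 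Hence $R_1^{loc}(X_p)$ is a linear subspace of $\CC^n$ of dimension $m-1 \geq 2$, and in particular non-trivial.

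There is essentially no obstacle in this argument; the only step meriting a moment of care is the lift from the multiplicity-$m$ point $p \in \CP^2$ to the rank-two flat $X_p \in L_2(\carr)$ lying in exactly the $m$ coned hyperplanes, which is a routine consequence of the homogenization construction of $\carr$ recalled in Section~2.1.
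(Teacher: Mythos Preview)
Your argument is correct and is exactly the approach the paper takes: the paper remarks that the lemma ``follows easily from the definition as each vertex corresponds to a point in $\arr^*$ of multiplicity at least three, hence a rank two component of $L(\carr)$,'' and you have simply spelled this out, including the explicit dimension count $\dim R_1^{loc}(X_p)=m-1\geq 2$.
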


\begin{example}
\label{ex:braid-arr-resvars}
Consider the arrangement  \( \arr \) in $\CC^3$ defined by the polynomial  \( Q(x,y,z) = xyz(x+y)(y+z)(x+z) \). This is the rank 3 braid arrangement with associated matroid  \( M(K_4) \). We may depict the real part of the projectivization of this arrangement in Figure \ref{fig:braid-arrangement}.  Using the labeling of the lines as in the figure, we have four local components in  \( R_1(\arr) \). Let  \( \{i,j,k\} \) denote the point in  \( \arr^* \)where the lines  labelled by  \( i,j,k \) intersect. The four local components are
 \begin{align*}
  R_1(\arr, \{1,2,6\})&= \left\{ \lambda \in \CC^6 | \lambda_1 + \lambda_2 + \lambda_6 = 0, \lambda_3=\lambda_4=\lambda_5=0 \right\}\\
  R_1(\arr, \{1,3,5\}) &= \left\{ \lambda \in \CC^6 \Big| \lambda_1 + \lambda_3 + \lambda_5 = 0, \lambda_2=\lambda_4=\lambda_6=0 \right\}\\
   R_1(\arr, \{2,3,4\})&= \left\{ \lambda \in \CC^6 \Big| \lambda_2 + \lambda_3 + \lambda_4 = 0, \lambda_1=\lambda_5=\lambda_6=0 \right\}\\
  R_1(\arr, \{4,5,6\})&= \left\{ \lambda \in \CC^6 \Big| \lambda_4 + \lambda_5 + \lambda_6 = 0, \lambda_1=\lambda_2=\lambda_3=0 \right\}\\
\end{align*}
Let  \( \{f_1, \dots, f_6\} \) be the canonical set of basis vectors for  \( \CC^6 \). Associate each hyperplane  \( H_i \) with the vector  \( f_i \). Then we may write  \( R_1(\arr, \{1,2,6\}) \) as the span of the vectors  \(f_1 - f_2\) and  \( f_2 - f_6 \).
\end{example}

\begin{notation}
  Let \( V \subseteq \CC^n\) be any variety that is the union of linear subspaces. Then, each point  \( v \in V \) may be regarded as a vector  \( \mathbf{v} \in \CC^n \).  Denote by  \( \Span{V}\) the linear subspace of  \( \CC^n \) spanned by the vectors \( \mathbf{v} \in V. \)
\end{notation}

\begin{theorem}\label{thm:charvar-to-resvar}
  Let  \( \arr \) be a central arrangement of  \( n \) hyperplanes such that \[
  V_1^1(\arr) \cong (V_1^1(M_1) \times \mathbf{1}^{n_2}) \cup (\mathbf{1}^{n_1} \times V_1^1(M_2)),
  \] where  \( n=n_1+n_2 \). Then, the resonance variety  \( R_1(\arr) \) decomposes into a union of varieties $R_1 (\arr) = R_1(M_1) \cup R_1(M_2).$ such that the intersection of \( \Span{R_1(M_1)} \) and \(\Span{R_1(M_2)} \) is the trivial vector.
\end{theorem}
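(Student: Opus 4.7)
My plan is to take tangent cones at the identity $\mathbf{1} \in (\CC^*)^n$ of both sides of the characteristic variety decomposition and then read off the resonance variety decomposition from the result. The essential input is the tangent cone theorem for hyperplane arrangements due to Cohen and Suciu (with earlier contributions of Esnault--Schechtman--Viehweg and Libgober), which states that for any hyperplane arrangement $\arr$ the tangent cone of $V_1^1(\arr)$ at $\mathbf{1}$ coincides with $R_1(\arr)$, and that each irreducible component of $V_1^1(\arr)$ through $\mathbf{1}$ is the exponential image of an irreducible linear component of $R_1(\arr)$.

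First I apply the tangent cone operation at $\mathbf{1}$ to the hypothesis
\[
V_1^1(\arr) \cong \bigl(V_1^1(M_1) \times \mathbf{1}^{n_2}\bigr) \cup \bigl(\mathbf{1}^{n_1} \times V_1^1(M_2)\bigr).
\]
Tangent cones commute with finite unions, and a reduced point contributes only the zero vector, so $TC_{(\mathbf{1},\mathbf{1})}(V \times \{\mathbf{1}\}^k) = TC_\mathbf{1}(V) \times \mathbf{0}^k$. Defining $R_1(M_i) := TC_\mathbf{1}(V_1^1(M_i))$, which for the CW complex realizing the finitely presented group $G_i$ coincides with the resonance variety computed from $H^*(M_i,\CC)$, the operation produces exactly the desired decomposition
\[
R_1(\arr) = \bigl(R_1(M_1) \times \mathbf{0}^{n_2}\bigr) \cup \bigl(\mathbf{0}^{n_1} \times R_1(M_2)\bigr).
\]

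Finally I verify the span condition. The first piece $R_1(M_1) \times \mathbf{0}^{n_2}$ lies entirely in the coordinate subspace $\CC^{n_1} \times \{\mathbf{0}\}^{n_2}$, and the second piece $\mathbf{0}^{n_1} \times R_1(M_2)$ lies entirely in $\{\mathbf{0}\}^{n_1} \times \CC^{n_2}$. These two coordinate subspaces meet only at the origin, so the spans of the two components of $R_1(\arr)$ meet only at the trivial vector, which is precisely the claim.

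The main obstacle is ensuring that the tangent cone theorem applies uniformly. For $\arr$ on the left-hand side it is the classical Cohen--Suciu statement for arrangements. On the right-hand side one must know that the tangent cones of $V_1^1(M_i)$ also match the genuine resonance varieties of the factor groups $G_i$, rather than some larger enveloping variety. This is a formality/linearity property inherited from the direct product structure and should follow from the functoriality of the exponential map under the splitting $(\CC^*)^n = (\CC^*)^{n_1} \times (\CC^*)^{n_2}$ induced by $G \cong G_1 \times G_2$, but it is the step that requires the most care in a formal write-up.
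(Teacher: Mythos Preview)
Your approach is essentially the paper's: invoke the Cohen--Suciu tangent cone theorem (Theorem~5.2 of \cite{CS-CharVars-MR1692519}) and observe that the two pieces on the right-hand side sit in complementary coordinate subspaces, so their spans meet only at the origin. The concern in your final paragraph is unnecessary, since the symbols $R_1(M_i)$ are \emph{defined} in the proof as the images of the tangent cones of $V_1^1(M_i)\times\mathbf{1}$ under the linear isomorphism induced by the monomial automorphism realizing the $\cong$; no comparison with an intrinsic resonance variety of $G_i$ is required, and the only care actually needed is making that transport explicit (which the paper does via maps it calls $\phi$ and $g^*$).
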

\begin{proof} 
  By Theorem 5.2 in \cite{CS-CharVars-MR1692519}, the tangent cone  \( \mathcal{V}_1(\arr) \) of the characteristic variety  \( V_1^1(\arr) \) at the point  \( \mathbf{1} \) coincides with the resonance variety  \( R_1(\arr) \). More explicitly, there is a linear isomorphism  \( \phi \) from the tangent space of  \( \CC^n \) at  \( \mathbf{1} \) to  \( \CC^n \) such that  \( \phi(\mathcal{V}_k(\arr)) = R_1(\arr) \). 

  Let  \( \CC[t_1, \dots, t_{n_1},t_{n_1 +1}, \dots, t_{n_1 + n_2}]\) be a coordinate ring for  \( V = (V_1^1(M_1) \times \mathbf{1}^{n_2}) \cup (\mathbf{1}^{n_1} \times V_1^1(M_2)) \) and let  \( \CC[z_1, \dots, z_{n_1},z_{n_1 +1}, \dots, z_{n_1 + n_2}] \) be a coordinate ring for the tangent space of  \( \mathbf{1} \) in  \( \CC^n. \) Then the variety \(V_1^1(M_1) \times \mathbf{1}^{n_2}\) is defined by an ideal  \( I_1 \in \CC[t_1, \dots, t_{n_1},t_{n_1 +1}, \dots, t_{n_1 + n_2}]  \) generated by the union of an ideal  \( I_1' \in  \CC[t_1, \dots, t_{n_1}]\) and  the set \( \{t_{n_1 +1}-1, \dots, t_{n_1 + n_2}-1\} \). Therefore, the tangent cone  \( \mathcal{V}(M_1) \) of the variety \(V_1^1(M_1) \times \mathbf{1}^{n_2}\) at  \( \mathbf{1} \) is defined by the ideal generated by an ideal \( J_1 \in \CC[z_1, \dots, z_{n_1}] \) and the set  \( \{z_{n_1 + 1}, \dots, z_{n_1 + n_2}\} \). In a similar manner, the tangent cone of  \( \mathbf{1}^{n_1} \times V_1^1(M_2) \) at  \( \mathbf{1} \) denoted by  \( \mathcal{V}(M_2) \) is defined by the ideal generated by an ideal \( J_2 \in \CC[z_{n_1 +1}, \dots, z_{n_1 + n_2}] \) and the set  \( \{z_{1}, \dots, z_{n_1}\} \). Therefore, the tangent cone of  \( V \) at  \( \mathbf{1} \) is given by  \( \mathcal{V}(M_1) \cup \mathcal{V}(M_2) \). From the definition of the ideals generating  \( \mathcal{V}(M_1) \) and \( \mathcal{V}(M_2) \), the varieties are orthogonal. Therefore, the subspaces  \( \Span{\mathcal{V}(M_1)}\) and \(\Span{\mathcal{V}(M_2)}\) intersect only at the origin.

  As  \( V_1^1(\arr) \cong V \), there is a monomial automorphism  \( g \) of  \( \CC^n \) inducing a linear isomorphism  \( g^* \) of tangent cones  such that \( g^*(\mathcal{V}(M_1) \cup \mathcal{V}(M_2)) = \mathcal{V}_1(\arr) \). Combined with the map  \( \phi \), we have \( R_1(\arr)  = \phi(g^*(\mathcal{V}(M_1) \cup \mathcal{V}(M_2))) \). Define  \( R_1(M_i) = \phi(g^*(\mathcal{V}(M_i)))  \). Then, as linear isomorphisms preserve unions and intersections of linear subspaces, the conclusion of the theorem follows.
\end{proof}

 \begin{theorem}
\label{thm:local-component-direct-sum}
Let  \( \arr^* \) be an arrangement of projective lines in  \( \CP^2 \) such that \[
  V_1^1(\carr) \cong (V_1^1(M_1) \times \mathbf{1}^{n_2}) \cup (\mathbf{1}^{n_1} \times V_1^1(M_2)) \cup (\mathbf{1}^{n_1} \times V_1^1(\ZZ))
  \] where the varieties  \( V_1^1(M_i) \) are not trivial. Let $R_1 (\arr) = R_1(M_1) \cup R_1(M_2)$ be the decomposition of the resonance variety from Theorem \ref{thm:charvar-to-resvar}.
If either  \( R_1(M_1) \) or  \( R_1(M_2) \) do not have local components, then  \( \arr^* \) has a decone with a general position partition.
\end{theorem}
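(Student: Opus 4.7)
The plan is a proof by contradiction. Assume without loss of generality that $R_1(M_2)$ has no local components, and suppose toward a contradiction that $\arr^*$ does not admit a decone with a general position partition. By the corollary following Lemma \ref{lem:simple}, every graph $F \in \mathcal{F}(\arr^*)$ is then connected, every edge of $F$ lies on a simple circuit, and every line of $\arr^*$ contains at least one Fan vertex.

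Let $N = |\arr^*|$. Each multiple point $v$ of multiplicity $m \geq 3$ gives a local component $R_1^{loc}(v) \subseteq R_1(\carr)$, an irreducible linear subspace of dimension $m-1$ which contains every difference $f_i - f_j$ of coordinate basis vectors for lines $L_i, L_j$ through $v$. Because $R_1(\carr) = R_1(M_1) \cup R_1(M_2)$ and $\Span{R_1(M_1)} \cap \Span{R_1(M_2)} = \{0\}$ by Theorem \ref{thm:charvar-to-resvar}, irreducibility forces each $R_1^{loc}(v)$ into exactly one piece, and by our assumption all of them land in $R_1(M_1)$. The connectedness of every Fan graph, together with the fact that every line contains a Fan vertex, implies that the line-incidence graph on $\arr^*$ (vertices are the lines, edges join lines sharing a multiple point of multiplicity at least three) is connected. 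Chaining coordinate differences $f_i - f_j$ along this connected graph therefore shows that $\Span{R_1(M_1)}$ contains the full $(N-1)$-dimensional hyperplane $H = \{\lambda \in \CC^N : \sum_{i=1}^N \lambda_i = 0\}$.

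The contradiction comes from an ambient-dimension argument. Using Proposition \ref{prop:hopf} to identify $M(\carr) \cong M(\arr^*) \times \CC^*$, a Künneth computation shows $V_1^1(\carr) \subseteq \{\prod_{i=1}^N t_i = 1\}$, and passing to tangent cones at $\mathbf{1}$ gives $R_1(\carr) \subseteq H$. Combined with the previous paragraph, this forces $\Span{R_1(M_1)} = H$. Since $\Span{R_1(M_2)} \subseteq H$ and $\Span{R_1(M_1)} \cap \Span{R_1(M_2)} = \{0\}$, we must have $\Span{R_1(M_2)} = \{0\}$, i.e., $R_1(M_2) = \{0\}$. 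This contradicts the non-triviality of $R_1(M_2)$ extracted from the hypothesis that $V_1^1(M_2)$ is non-trivial, completing the proof.

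The main obstacle is the last step: transferring non-triviality of the characteristic variety $V_1^1(M_2)$ to non-triviality of its tangent cone $R_1(M_2)$ at the identity. A priori, $V_1^1(M_2)$ could consist only of translated-torus components missing $\mathbf{1}$, in which case $R_1(M_2) = \{0\}$ and the dimension count collapses. The direct-product hypothesis $\pi_1(M(\arr^*)) \cong G_1 \times G_2$ is essential here: combined with the Cohen--Suciu description of characteristic varieties of arrangement groups as unions of torsion-translated subtori, together with Corollary \ref{cor:prod-union}, it forces $V_1^1(G_2)$ to contain a positive-dimensional subtorus through $\mathbf{1}$ whenever it is non-trivial, yielding the required non-zero vector in $R_1(M_2)$ and closing the argument.
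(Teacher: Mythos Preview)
Your argument and the paper's share the same core: if all local components lie in $R_1(M_1)$, then $\Span{R_1(M_1)}$ fills the diagonal hyperplane $H=\{\sum\lambda_i=0\}$, forcing $R_1(M_2)=\{0\}$ and contradicting the non-triviality of $V_1^1(M_2)$. The organization differs. The paper argues constructively: it sets $\mathcal{B}_1=\{\text{lines through some multiple point of multiplicity }\geq 3\}$ and $\mathcal{B}_2=\arr^*\setminus\mathcal{B}_1$, and when $\mathcal{B}_2\neq\emptyset$ it writes down the partition directly (decone by any $L\in\mathcal{B}_1$), reserving the span/contradiction argument only for the case $\mathcal{B}_2=\emptyset$. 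You instead run the whole thing by contradiction and invoke the corollary after Lemma~\ref{lem:simple} to obtain both ``every line has a Fan vertex'' and ``every Fan graph is connected'' simultaneously.

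Your packaging is in one respect more careful. The paper asserts that $\mathcal{B}_1=\arr^*$ implies $\Span{R_1(M_1)}=\triangle$, but ``every line contains a high-multiplicity point'' alone does not force this (two disjoint triple points on six lines give a span of dimension $4$, not $5$); one needs connectedness of the Fan graph, which you supply explicitly via the corollary and your line-incidence-graph chaining. On the other hand, you and the paper rely on the same implication ``$V_1^1(M_2)$ non-trivial $\Rightarrow$ $R_1(M_2)\neq\{0\}$''; the paper asserts it in one line, while you correctly flag it as the delicate point. Your proposed fix (pulling the Cohen--Suciu/Arapura subtorus structure of $V_1^1(\carr)$ back through the monomial automorphism of Corollary~\ref{cor:prod-union}) is on the right track but is only sketched; a cleaner formulation is that the structure theorem forces every irreducible component of the decomposed variety to be a translated subtorus, so a non-trivial $V_1^1(G_2)$ consisting solely of $\{\mathbf{1}\}$ plus components missing $\mathbf{1}$ is the only danger, and you should say explicitly why this is ruled out (or, as the paper implicitly does, take this as part of what ``non-trivial'' means in the hypothesis).
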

\begin{proof}
  Suppose that both \( R_1(M_1) \) and \( R_1(M_2) \) do not have local components. Then there are no multiple points in the arrangement  \( \arr^* \). Deconing  \( \arr^* \) with respect to any line, will result in an arrangement of lines in general position. Thus an arrangement that has a general position partition.

  Without loss of generality, assume \( R_1(M_1) \) has local components and \( R_1(M_2) \) does not have local components. Let  \( \mathcal{B}_1 \) be the set of lines in  \( \arr^* \) containing points that induce local components in \( R_1(M_1) \) and let  \( \mathcal{B}_2 = \arr^* \setminus \mathcal{B}_1 \). If \( \mathcal{B}_1 = \arr^* \), then every line in the arrangement induces a local component contained in  \( R_1(M_1) \). Then  \( \Span{R_1(M_1)}= \triangle = \left\{ \lambda \in \CC^n \Big| \sum\limits_{i=1}^n \lambda_i =0 \right\}\). However,  as \( R_1(\carr) \subseteq \triangle \) \cite{Falk-ArrsAndCohom-MR1629681},   \( R_1(M_2) \) must be the trivial subspace. By hypothesis,  \( V_1^1(M_2) \) is not trivial, therefore its tangent cone and the variety  \( R_1(M_2) \) are not trivial subspaces. Therefore we have a contradiction and may conclude that  \( \mathcal{B}_1 \) and  \( \mathcal{B}_2 \) are not empty. In fact,  \( |\mathcal{B}_1 | \geq 3 \) as it contains at least one local component.

  Any line \( L_1 \in \mathcal{B}_1  \) and  \( L_2 \in \mathcal{B}_2 \) must intersect in a point of multiplicity two. If they intersect in a higher order point, the intersection induces a local component, but the lines in  \( \mathcal{B}_2 \) do not induce local components. Therefore,  \( \mathcal{B}_1   \) and  \( \mathcal{B}_2\) intersect in general position in  \( \CP^2. \)

  Pick any line  \( L \in \mathcal{B}_1 \) and consider  \( \decone{L}^*  \). The images of  \( \mathcal{B}_1   \) and  \( \mathcal{B}_2  \) intersect transversely in  \( \CC^2 \), thus  \( \arr^* \) has a decone with a general position partition.
\end{proof}

\section{Main Theorem}

We now prove Main Theorem 1 from the introduction, rephrased to make use of our terminology:

\begin{theorem}\label{thm:main}
  Let $\arr^*$ be a projective arrangement of lines in $\CP^2$ such that  $\pi_1(M(\arr^*))$ is isomorphic to a direct product of two non-trivial groups. Then  \( \arr^* \) has a decone with a general position partition.
\end{theorem}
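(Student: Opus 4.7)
The plan is to push the direct product hypothesis through the resonance variety machinery of Subsection~2.4, obtain a combinatorial bipartition of the vertices of any graph of Fan type of $\arr^{*}$, and then invoke Theorem~\ref{thm:norepeats} to produce a decone with a general position partition. Lemma~\ref{lem:fungroups-rel} gives $\pi_{1}(M(\carr)) \cong G_{1} \times G_{2} \times \ZZ$, and Corollary~\ref{cor:prod-union} decomposes $V_{1}^{1}(M(\carr))$ into a union of three products. Absorbing the $\ZZ$-factor into either $G_{1}$ or $G_{2}$ and then applying Theorem~\ref{thm:charvar-to-resvar} yields
\[
R_{1}(\carr) = R_{1}(M_{1}) \cup R_{1}(M_{2})
\quad\text{with}\quad
\Span{R_{1}(M_{1})} \cap \Span{R_{1}(M_{2})} = \{0\}.
\]
Each local component $R_{1}^{\mathrm{loc}}(X)$ is an irreducible linear subspace of $R_{1}(\carr)$, so by the trivial-intersection property it sits in exactly one of $R_{1}(M_{1})$, $R_{1}(M_{2})$. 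This partitions the vertices of every $F \in \mathcal{F}(\arr^{*})$ into two disjoint sets $V_{1} \sqcup V_{2}$.

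Theorem~\ref{thm:local-component-direct-sum} disposes of the degenerate case in which one of $R_{1}(M_{1})$, $R_{1}(M_{2})$ has no local components, so I may assume both $V_{1}$ and $V_{2}$ are nonempty and argue by contradiction: suppose $\arr^{*}$ admits no decone with a general position partition. The corollary following Lemma~\ref{lem:simple} then forces every $F \in \mathcal{F}(\arr^{*})$ to be connected, every edge of $F$ to lie on a simple circuit, and every line of $\arr^{*}$ to contain a vertex of $F$. By connectedness and the non-emptiness of both parts, $F$ has an edge $e = (v,w)$ with $v \in V_{1}$ and $w \in V_{2}$; let $L$ denote the line of $\arr^{*}$ carrying $e$.

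The heart of the argument is to verify the hypothesis of Theorem~\ref{thm:norepeats} for this $e$: every simple circuit containing $e$ uses at least two edges on $L$. Suppose otherwise, and let $v = v_{0},\, w = v_{1},\, v_{2},\ldots, v_{m-1},\, v_{m} = v_{0}$ be a simple circuit whose successive edges lie on lines $L_{1}=L, L_{2},\ldots,L_{m}$ with $L_{i} \neq L$ for $i \geq 2$. For $1 \leq i \leq m-1$ the lines $L_{i}$ and $L_{i+1}$ both contain the multiple point $v_{i}$, and the lines $L_{m}, L_{1}$ both contain $v_{0}$; accordingly set
\[
\omega_{i} := f_{L_{i}} - f_{L_{i+1}} \in R_{1}^{\mathrm{loc}}(v_{i}) \quad (1 \leq i \leq m-1), \qquad \omega_{m} := f_{L_{m}} - f_{L_{1}} \in R_{1}^{\mathrm{loc}}(v_{0}).
\]
Let $u_{i}$ denote the vertex hosting $\omega_{i}$ (so $u_{i} = v_{i}$ for $i < m$ and $u_{m} = v_{0}$), and put $A := \sum_{u_{i} \in V_{1}} \omega_{i}$. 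The cyclic telescoping $\sum_{i=1}^{m}\omega_{i} = 0$ gives $-A = \sum_{u_{i} \in V_{2}} \omega_{i}$, placing $A$ in $\Span{R_{1}(M_{1})}$ and $-A$ in $\Span{R_{1}(M_{2})}$, so that $A = 0$. However, $f_{L}$ appears only in $\omega_{1}$ (with coefficient $+1$) and in $\omega_{m}$ (with coefficient $-1$), because $L_{i} \neq L$ for $2 \leq i \leq m$. Since $u_{1} = w \in V_{2}$ and $u_{m} = v \in V_{1}$, only $\omega_{m}$ contributes to $A$, so the $f_{L}$-coefficient of $A$ equals $-1 \neq 0$, contradicting $A = 0$. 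Theorem~\ref{thm:norepeats} then delivers the required decone.

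The step I anticipate needing the most care is verifying that the linear isomorphism produced in the proof of Theorem~\ref{thm:charvar-to-resvar} respects the irreducible-component structure of $R_{1}(\carr)$, so that each $R_{1}^{\mathrm{loc}}(X)$ really sits inside exactly one of $R_{1}(M_{1})$, $R_{1}(M_{2})$. This follows from irreducibility of $R_{1}^{\mathrm{loc}}(X)$ together with the triviality of $\Span{R_{1}(M_{1})} \cap \Span{R_{1}(M_{2})}$ (any local component contained in both spans would be forced to be the zero subspace, impossible since $\dim R_{1}^{\mathrm{loc}}(X) \geq 2$), but it deserves explicit mention. Once the partition $V_{1} \sqcup V_{2}$ is established, the combinatorial-to-linear argument above is self-contained.
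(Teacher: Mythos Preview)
Your proposal is correct and follows essentially the same route as the paper's own proof: decompose $R_{1}(\carr)$ via Theorem~\ref{thm:charvar-to-resvar}, bipartition the multiple points according to which piece contains their local component, find an edge $e$ of a Fan graph joining the two parts, and then run a telescoping-vector argument around a simple circuit through $e$ with only one edge on $L$ to contradict $\Span{R_{1}(M_{1})}\cap\Span{R_{1}(M_{2})}=\{0\}$. The only cosmetic difference is the direction in which you package the contradiction: you verify the hypothesis of Theorem~\ref{thm:norepeats} for $e$ (thereby producing a decone with a general position partition, against the standing assumption), whereas the paper invokes the contrapositive of Theorem~\ref{thm:norepeats} to first obtain such a circuit and then reaches the resonance contradiction directly; the two are logically equivalent and your telescoping computation is the same as the paper's, just written with $A$ and $-A$ rather than with both sides spelled out.
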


\begin{proof} 
  We proceed by contradiction. Suppose that \( \arr^* \) does not have a decone with a general position partition. 

As $\pi_1(M(\arr^*)) \cong B \times C$ where  \( B \) and  \( C \) are not isomorphic to the trivial group, by Theorem \ref{thm:group-to-charvar-union} and Theorem \ref{thm:charvar-to-resvar}, we have that  \( R_1(\carr ) = R_1(M_1) \cup R_1(M_2) \) where the intersection of the subspaces \( \Span{R_1(M_1)} \) and  \( \Span{R_1(M_2)} \) consists only of the trivial vector.

As  \( \arr^*  \) does not have a decone with a general position partition, by Lemma \ref{lem:disconnected} all choices of graphs of Fan-type  \( F \in \mathcal{F}(\arr^*)\) are connected. By Theorem \ref{thm:local-component-direct-sum}, both components  \( R_1(M_1) \) and  \( R_1(M_2) \) must have local components. Therefore, there exists an edge  \( e \in F \) with vertices  \( v,w \) such that  \( R_1^{loc}(v) \subseteq R_1(M_1) \) and \( R_1^{loc}(w) \subseteq R_1(M_2) \).

Since \( \arr^* \) does not have a decone with a general position partition, by the contrapositve of Theorem \ref{thm:norepeats} there is a choice of graph of Fan type \( F \in \mathcal{F}(\arr^*) \) such that for every simple circuit containing the edge  \( e \), the circuit does not contain another edge that lies in the same line as  \( e \). Let $C =\{v,e,w,e_2,z_3,e_3,z_4,\dots, z_m,e_m,v\}$ be such a circuit. 

Let  \( H \) denote the line containing the edge  \( e \) and let  \( H_i \) denote the line containing  \( e_i \) for  \( 2 \leq i \leq m \). (Note that  \( H_i  \) may be the same line as some  \( H_j \) for  \( i \neq j \); however,  \( H \neq H_i \) for all  \( 2 \leq i \leq m \).) Let  \( \{f_i\}_{i=1}^n\) denote the canonical set of basis vectors in  \( \CC^n \). 

Let  \( H \) be associated with the vector  \( f_1 \). Let  \( H_j \) be associated with a vector  \( g_j \) such that  \( g_j \in \{f_i\}_{i=2}^n \). Then we have the following vectors in the local resonance components induced by the vertices of the circuit \( C \).
\begin{align*}
  f_1 - g_m &\in R_1^{loc}(v) \\ 
  g_2 - f_1 &\in R_1^{loc}(w) \\
 g_j - g_{j-1} &\in R_1^{loc}(z_j), \text{ for } 3 \leq j \leq m 
\end{align*}

One can see that \[
(f_1 - g_m) + (g_2 - f_1) + \sum\limits_{j=3}^m  (g_j - g_{j-1}) = 0.
\]
Let  \[ I = \{ i : 3 \leq i \leq m, R_1^{loc}(z_i) \subseteq R_1(M_1)\} \]  \[ J = \{ j : 3 \leq j \leq m, R_1^{loc}(z_j) \subseteq R_1(M_2)\} \] Rearranging the sum so that all vectors in local components contained in  \( R_1(M_1) \) are on the left side of the equals sign and all vectors in local components contained in  \( R_1(M_2) \) are on the right side of the equals sign yields
\[
(f_1 - g_m) + \sum\limits_{i\in I} ( g_i - g_{i-1}) =  - (g_2 - f_1)  - \sum\limits_{j\in J} ( g_j - g_{j-1}).
\]
As  \( g_j \in \{f_i\}_{i=2}^n   \), the  vector \( f_1 \) only appears once on each side of the equality. Therefore, both sides of the equality are non-trivial vectors. Further, \[
(f_1 - g_m) + \sum\limits_{i\in I} ( g_i - g_{i-1})  \in \Span{R_1(M_1)} 
\] 
and \[
-(g_2 - f_1) - \sum\limits_{j\in J} ( g_j - g_{j-1})  \in \Span{R_1(M_2)}. 
\]
Therefore,  \( \Span{R_1(M_1)}  \cap \Span{R_1(M_2)} \neq \{\mathbf{0}\} \), which contradicts Theorem \ref{thm:charvar-to-resvar}.

Hence, \( \arr^* \) does  have a decone with a general position partition.
\end{proof}

We may now prove Main Theorem 2 from the introduction:

\begin{theorem}\label{thm:converse-okasakamoto}
  Let  \( \arr_1 \) and  \( \arr_2 \) be non-empty arrangements in  \( \CC^2 \) such that \[
  \pi_1(M(\arr_1 \cup \arr_2)) \cong \pi_1(M(\arr_1)) \times \pi_1(M(\arr_2)).
  \]
  Then,  the intersection of \( \arr_1 \) and  \( \arr_2 \) consists of  \( |\arr_1| \cdot |\arr_2| \) points of multiplicity two.
\end{theorem}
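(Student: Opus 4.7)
The plan is to apply Main Theorem 1 (Theorem~\ref{thm:main}) to the projective closure of $\arr = \arr_1 \cup \arr_2$ and then argue that the general position partition produced actually realizes the given partition $\{\arr_1, \arr_2\}$. First I would pass to $\arr^* = \arr_1^* \cup \arr_2^* \cup \{L_\infty\} \subset \CP^2$, where $\arr_i^*$ is the projective closure of $\arr_i$. By Lemma~\ref{lem:fungroups-rel}, $\pi_1(M(\arr^*)) \cong \pi_1(M(\arr)) \cong G_1 \times G_2$ where $G_i := \pi_1(M(\arr_i))$. Since each $\arr_i$ is non-empty, $G_i$ has non-trivial abelianization and so both factors are non-trivial. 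Theorem~\ref{thm:main} then yields a line $L \in \arr^*$ with a general position partition $\decone{L}^* = \mathcal{D}_1 \cup \mathcal{D}_2$.

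Next I would argue by contradiction that $\{\arr_1, \arr_2\}$ is itself a general position partition. Suppose not: then some $H_1 \in \arr_1$ and $H_2 \in \arr_2$ have projective closures meeting at a point $p$ of multiplicity at least three in $\arr^*$, either because a third line of $\arr$ concurs with them in $\CC^2$ or because $H_1 \parallel H_2$ (so that $H_1^*, H_2^*, L_\infty$ concur on $L_\infty$). This $p$ induces a non-trivial local component $R_1^{\mathrm{loc}}(p)$ of $R_1(\carr)$ whose support involves coordinates for lines from both $\arr_1$ and $\arr_2$.

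By Corollary~\ref{cor:prod-union} together with Theorem~\ref{thm:charvar-to-resvar} applied to the cone $\carr$, we have a decomposition $R_1(\carr) = R_1(M_1) \cup R_1(M_2)$ with $\Span{R_1(M_1)} \cap \Span{R_1(M_2)} = \{\mathbf{0}\}$. The local component $R_1^{\mathrm{loc}}(p)$ is irreducible, hence contained in exactly one of $R_1(M_1)$ or $R_1(M_2)$. The main obstacle I anticipate is reconciling this abstract decomposition with the natural coordinate partition indexed by $\arr_1 \cup \{L_\infty\}$ versus $\arr_2 \cup \{L_\infty\}$, since Theorem~\ref{thm:group-to-charvar-union} only identifies the characteristic varieties up to a monomial automorphism of the ambient torus. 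To close this gap, I would exploit that $G_i = \pi_1(M(\arr_i))$ arises from an actual arrangement, using the restriction maps $\rho_i \colon \pi_1(M(\arr)) \to G_i$ induced by the inclusions $M(\arr) \hookrightarrow M(\arr_i)$; these kill meridians of $\arr_j$-lines for $j \neq i$, which should force $\Span{R_1(M_i)}$ to lie in the coordinate subspace indexed by $\arr_i \cup \{L_\infty\}$.

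With this coordinate-support compatibility in hand, the mixed local component $R_1^{\mathrm{loc}}(p)$ carries vectors with nonzero entries in both the $\arr_1$- and $\arr_2$-coordinates, so it cannot be contained purely in $R_1(M_1)$ or in $R_1(M_2)$. This contradicts the transversality of the spans, so no such mixed multi-point $p$ exists: every pair $(H_1, H_2) \in \arr_1 \times \arr_2$ meets at a distinct point of multiplicity two in $\arr$, giving exactly $|\arr_1| \cdot |\arr_2|$ such points as required.
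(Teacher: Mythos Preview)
Your proposal is correct in spirit and follows essentially the same contradiction strategy as the paper: decompose $R_1(\carr)$ into two pieces with trivially intersecting spans, then observe that a ``mixed'' local component coming from a bad intersection of $\arr_1$ and $\arr_2$ cannot sit inside a single piece.

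Two remarks on how your write-up differs from the paper's. First, your opening paragraph invoking Theorem~\ref{thm:main} is dead weight: the partition $\mathcal{D}_1 \cup \mathcal{D}_2$ it produces is never used, and your actual argument in the remaining paragraphs proceeds without it. The paper does not pass through Main Theorem~1 at all here. Second, where you anticipate an obstacle in aligning the abstract decomposition $R_1(M_1)\cup R_1(M_2)$ with the coordinate blocks indexed by $\arr_1$ and $\arr_2$, the paper resolves this by exploiting that each factor $G_i=\pi_1(M(\arr_i))$ is itself an arrangement group: it identifies $V_1^1(G_i)\times\{1\}$ with $V_1^1(\pi_1(M(\mathbf{c}\arr_i)))$ and hence, via the tangent cone theorem, identifies the piece $R_1(M_i)$ with the concrete resonance variety $R_1(\mathbf{c}\arr_i)$, which manifestly lives in the coordinate subspace indexed by $\arr_i\cup\{H_\infty\}$. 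Your proposal to use the inclusion-induced maps $\rho_i\colon\pi_1(M(\arr))\to G_i$ is exactly the dual of this identification, so you are reaching for the same idea; the paper just states it directly at the level of the subarrangements $\mathbf{c}\arr_i\subset\mathbf{c}(\arr_1\cup\arr_2)$. Once that identification is made, the contradiction via the vectors $e_{H_1}-e_{H_\infty}$ and $e_{H_2}-e_{H_\infty}$ in $R_1^{\mathrm{loc}}(X)$ is identical to what you describe.
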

\begin{proof}
  Let  \( n_i \) be the number of lines in the arrangement  \( \arr_i \). By Theorem \ref{lem:fungroups-rel}, we know that \[
  \pi_1(M(\mathbf{c}(\arr_1 \cup \arr_2))) \cong \pi_1(M(\arr_1)) \times \pi_1(M(\arr_2)) \times \ZZ.
  \] 
  Using Theorem \ref{thm:group-to-charvar-union}, the direct product
  structure of the fundamental group allows us to decompose the
  characteristic variety as
\begin{align*}
  V_1^1(M(\mathbf{c}(\arr_1 \cup \arr_2))) & \cong
  (V_1^1(\pi_1(M(\arr_1))) \times \mathbf{1}^{n_2 + 1})  \\ &\cup (\mathbf{1}^{n_1+ 1}  \times V_1^1(\pi_1(M(\arr_1))) \times 1) \\&\cup  ( \mathbf{1}^{n_1 + n_2 + 1}).
\end{align*}
As  \( \pi_1(M(\mathbf{c}\arr_i)) \cong \pi_1(M(\arr_i)) \times \ZZ \), we may identify the characteristic varieties associated to each group, ie \[
  V_1^1(\pi_1(M(\mathbf{c}\arr_i))) = V_1^1(\pi_1(M(\arr_i)) \times \ZZ) = V_1^1(\pi_1(M(\arr_i))) \times \{1\}. 
  \]  Therefore, by Theorem 5.2 in \cite{CS-CharVars-MR1692519}, we may identify the resonance variety  \(R_1(\mathbf{c}\arr_i)\) with the tangent cone of the variety  \(  V_1^1(\pi_1(M(\arr_i))) \times \{1\} \) at  \( \mathbf{1} \). We may also identify  \( \mathbf{c}\arr_i \) as subarrangements of  \( \mathbf{c}(\arr_1 \cup \arr_2) \), where  \( \mathbf{c}\arr_1 \) and  \( \mathbf{c}\arr_2 \) have the hyperplane  \( H_{\infty} \) that was added to the cone over  \( \arr_1 \cup \arr_2 \) in common.
 
  Using Theorem \ref{thm:charvar-to-resvar} and the identifications given above, we see that \[
  R_1(\mathbf{c}(\arr_1 \cup \arr_2)) \cong R_1(\mathbf{c}\arr_1) \cup R_1(\mathbf{c}\arr_2)\] 
   and the intersection of \(R_1(\mathbf{c}\arr_1)\) and \(R_1(\mathbf{c}\arr_2)\) is the origin. As such, the varieties \(R_1(\mathbf{c}\arr_1)\) and \(R_1(\mathbf{c}\arr_2)\) do not have any non-trivial components in common.

   Suppose by way of contradiction, that the arrangements  \( \arr_1 \) and  \( \arr_2 \) do not intersect in \( |\arr_1| \cdot |\arr_2| \) points of multiplicity two. Then there exists lines in the arrangements,  \( H_1 \in \arr_1 \) and  \( H_2 \in \arr_2 \), such that the lines intersect in a point of multiplicity greater than two in the arrangement  \( \arr_1 \cup \arr_2 \) or are parallel. If the lines are parallel, then the hyperplanes in  \( \mathbf{c}(\arr_1 \cup \arr_2) \) corresponding to  \( H_1 \) and  \( H_2 \) intersect with the hyperplane  \( H_{\infty} \) added to the arrangement \( \mathbf{c}(\arr_1 \cup \arr_2) \)  in a codimension two subspace  \( X \) of  \( \CC^3 \). Likewise, if the lines intersect in a higher order point, then the point corresponds to a codimension two subspace  \( X \) in the cone \( \mathbf{c}(\arr_1 \cup \arr_2) \). By abuse of notation, let  \( H_\infty \) denote a line different from  \( H_1 \) and  \( H_2 \) such that  \( X \subseteq H_\infty \). In either case, the subspace  \( X \) induce a non-trivial local component  \( R_1^{loc}(X) \) of the resonance variety  \( R_1(\mathbf{c}(\arr_1 \cup \arr_2))  \).

   Let  \( \{e_{H}\}_{H \in \mathbf{c}(\arr_1 \cup \arr_2)} \) be a basis for the vector space  \( \CC^{n_1 + n_2 + 1} \) containing the resonance variety  \( R_1(\mathbf{c}(\arr_1 \cup \arr_2)) \). Then the vectors  \( e_{H_1} - e_{H_{\infty}} \) and  \( e_{H_2} - e_{H_{\infty}} \) are both contained in the local component \( R_1^{loc}(X). \) However,  \( e_{H_2} - e_{H_{\infty}}  \notin R_1(\mathbf{c}\arr_1)\) and \( e_{H_1} - e_{H_{\infty}}  \notin R_1(\mathbf{c}\arr_2)\) as \( H_2 \notin \arr_1 \) and \( H_1 \notin \arr_2\).  Therefore, the connected component \( R_1^{loc}(X) \)  is not contained in either \(R_1(\mathbf{c}\arr_1)\) or \(R_1(\mathbf{c}\arr_2)\).  But this is a contradiciton as  \( R_1(\mathbf{c}(\arr_1 \cup \arr_2) ) = R_1(\mathbf{c}\arr_1) \cup R_1(\mathbf{c}\arr_2) \) and  \( R_1(\mathbf{c}\arr_1) \cap R_1(\mathbf{c}\arr_2) = \{\mathbf{0}\}. \)

Therefore, the intersection of \( \arr_1 \) and  \( \arr_2 \) consists of  \( |\arr_1| \cdot |\arr_2| \) points of multiplicity two and the theorem is proven.
\end{proof}

We are left with the following question:

\textbf{Question: } For what classes of algebraic curves does the converse of the theorem
of Oka and Sakamoto hold?

\section{Applications and Examples}

\begin{figure}[!h]
		\begin{center} 
			\begin{tikzpicture}[scale=0.7]
				
				\draw (-3,1) to (3,1);
				\draw (-3,-1) to (3,-1);
				\draw (1,-3) to (1,3);
				\draw (-1,-3) to (-1,3);
				\draw (-3,-3) to (3,3);
 \draw[rounded corners=5mm] (-3.4,0) -- (-3.4,-3.4) -- (3.4,-3.4) --(3.4,3.4) -- (-3.4,3.4)--(-3.4,0); 
\draw (-2.7,1.5)  node {$H_1$};
\draw (-2.7,-0.5)  node {$H_2$};
\draw (-2.7,-2)  node {$H_3$};
\draw (-1.5,-2.7)  node {$H_4$};
\draw (0.5,-2.7)  node {$H_5$};
\draw (2.7,-2.7)  node {$H_6$};
			\end{tikzpicture}
			\caption{The real part of the projective arrangement defined by $Q(\Arr^*) = xyz(x-y)(x-z)(y-z)$. }
			\label{fig:braid-arrangement}
		\end{center}
	\end{figure}
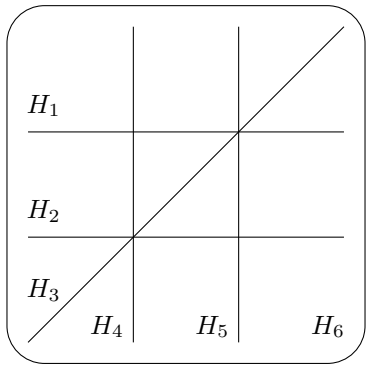

\begin{example} Let  \( \arr_3^* \) be the projective arrangement defined by  \( Q(\Arr_3^*) = xyz(x-y)(x-z)(y-z) \). The real part of the arrangement is depicted in Figure \ref{fig:braid-arrangement}, where the line labelled with  \( H_6 \) is the ``line at infinity.'' One should also recall that the parallel lines meet at infinity. The local components of the resonance variety of the cone over  \( \arr^* \) are shown in Example \ref{ex:braid-arr-resvars}. Let  \( \{f_1,\dots,f_6\} \) denote the canonical basis for  \( \CC^6 \) as a vector space. Then each local resonance variety may be regarded as a span of vectors as follows:
\begin{align}
  R_1(\carr^*, \{1,2,6\})&= \Span{f_1 - f_6, f_2 - f_6}  & R_1(\carr^*, \{1,3,5\})  &= \Span{f_1 - f_5, f_3 - f_5} \\
   R_1(\carr^*, \{2,3,4\})&= \Span{f_2 - f_4, f_3 - f_4} & R_1(\carr^*, \{4,5,6\})&= \Span{f_4 - f_6, f_5 - f_6}  \\
\end{align}
By a simple exercise in linear algebra, one can see that there does not exist a decomposition of  \( R_1(\carr^*) \) into varieties  \( A \) and  \( B \) such that  \( \Span{A} \cap \Span{B} = \{0\} \). Therefore by combining the converse of Theorem \ref{thm:charvar-to-resvar} and Corollary \ref{cor:prod-union}, we have that  \( \pi_1(M(\arr^*)) \) is not isomorphic to a non-trivial direct product of groups.
\end{example}

\begin{example} Let  \( \arr\) be the arrangement defined by intersecting a generic hyperplane with the arrangement defined by $Q(\arr_3) = xyz(x-y)(x-z)(y-z)$ in  \( \CC^3 \). Figure \ref{fig:gen2sec-braid} depicts the real part the arrangement  \( \arr. \) We note that the fundamental group \( \pi_1(M(\arr)) \) is isomorphic to the pure braid group on four strands  \( P_4 \). It is well known that  \( P_4 \cong (\FF_3 \rtimes \FF_2) \times \ZZ \). Therefore, by Theorem \ref{thm:main} the projective completion of  \( \arr \) has a line we may decone the arrangement with respect to so that the decone has a general position partition. In this case, we may see in Figure \ref{fig:proj-braid} that if we use the line  \( H_6 \) as the ``line at infinity'' then we may partition the decone into the sets  \( \{H_7\}\) and  \( \{H_1,H_2,H_3,H_4,H_5\} \) to obtain a general position partition.
  
	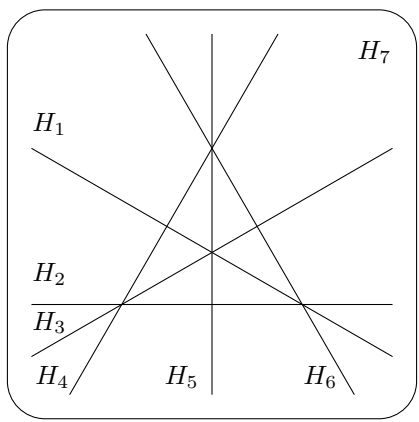
\begin{figure}[!h]
		\begin{center} 
			\begin{tikzpicture}[scale=0.8]
				
				\draw (-3,-1.5) to (3,-1.5);
\draw(-3,-2.3660254) to (3,1.09807621);
\draw(3,-2.3660254) to (-3,1.09807621);
\draw(0,-3) to (0,3);
\draw(-2.3660254,-3) to (1.09807621 ,3);
\draw(2.3660254,-3) to (-1.09807621 ,3);
 \draw[rounded corners=5mm] (-3.4,0) -- (-3.4,-3.4) -- (3.4,-3.4) --(3.4,3.4) -- (-3.4,3.4)--(-3.4,0); 
\draw (-2.7,1.5)  node {$H_1$};
\draw (-2.7,-1)  node {$H_2$};
\draw (-2.7,-1.8)  node {$H_3$};
\draw (-2.65,-2.7)  node {$H_4$};
\draw (-0.5,-2.7)  node {$H_5$};
\draw (1.8,-2.7)  node {$H_6$};
\draw (2.7,2.7)  node {$H_7$};
				 
			\end{tikzpicture}
			\caption{The real part of the projectivization of a generic two-dimensional section of the arrangement defined by $Q(x,y,z) = xyz(x-y)(x-z)(y-z)$. }
			\label{fig:gen2sec-braid}
		\end{center}
	\end{figure}
\end{example}

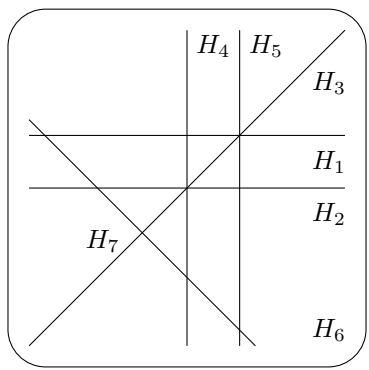
\begin{figure}[!h]
		\begin{center} 
			\begin{tikzpicture}[scale=0.7]
				
				\draw (-3,1) to (3,1);
				\draw (-3,0) to (3,0);
				\draw (1,-3) to (1,3);
				\draw (0,-3) to (0,3);
				\draw (-3,-3) to (3,3);
\draw (-3,1.3) to (1.3,-3);
 \draw[rounded corners=5mm] (-3.4,0) -- (-3.4,-3.4) -- (3.4,-3.4) --(3.4,3.4) -- (-3.4,3.4)--(-3.4,0); 
\draw (2.7,0.5)  node {$H_1$};
\draw (2.7,-0.5)  node {$H_2$};
\draw (2.7,2)  node {$H_3$};
\draw (0.5,2.7)  node {$H_4$};
\draw (1.5,2.7)  node {$H_5$};
\draw (2.7,-2.7)  node {$H_6$};
\draw (-1.6,-1)  node {$H_7$};			 
			\end{tikzpicture}
			\caption{The real part of the projectivization of a generic two-dimensional section of the arrangement defined by $Q(\Arr) = xyz(x-y)(x-z)(y-z)$, with the line  \( H_6 \) as the ``line at infinity.'' }
			\label{fig:proj-braid}
		\end{center}
	\end{figure}

	\bibliographystyle{amsalpha} 
\providecommand{\bysame}{\leavevmode\hbox to3em{\hrulefill}\thinspace}
\providecommand{\MR}{\relax\ifhmode\unskip\space\fi MR }
\providecommand{\MRhref}[2]{%
  \href{http://www.ams.org/mathscinet-getitem?mr=#1}{#2}
}
\providecommand{\href}[2]{#2}

\end{document}